\newtheorem{thm}{Theorem}[section]
  \newtheorem{lem}[thm]{Lemma}
  \newtheorem{prop}[thm]{Proposition}
  \newtheorem{cor}[thm]{Corollary}
\newcommand{\Hom}{\mathop{\mathrm{Hom}}\nolimits}
\newcommand{\SLF}{\mathop{\mathrm{SLF}}\nolimits}
\numberwithin{equation}{section}
\begin{document}
\title[A construction of symmetric linear functions of $\overline{U}_q (sl_2)$]{A construction of symmetric linear functions of the restricted quantum group $\overline{U}_q (sl_2)$}
\author[Y. ARIKE]{Yusuke ARIKE}
\address{Department of Pure and Applied Mathematics, Graduate School of Information science and Technology, Osaka University, Toyonaka, Osaka 560-0043, JAPAN}
\email{y-arike@cr.math.sci.osaka-u.ac.jp}
\subjclass[2000]{Primary~16W35, Secondary~17B37}
\begin{abstract}
In this paper we construct all the primitive idempotents of the restricted quantum group $\overline{U}_q (sl_2)$
and also determine the multiplication rules among a basis given by the action of generators of $\overline{U}_q (sl_2)$
to the idempotents.
By using this result we construct a basis of the space of symmetric linear functions of $\overline{U}_q (sl_2)$
and determine the decomposition of the integral of the dual of $\overline{U}_q (sl_2)$ twisted by the balancing element
to the basis of the space of symmetric linear functions.
\end{abstract}
\maketitle

\section{Introduction}
The restricted quantum group $\overline{U}_q (sl_2)$ at roots of unity has been studied in various contexts.
In \cite{FGST1} and \cite{FGST2} it is shown that 
the category of modules of the irrational vertex operator algebra $\mathcal{W}(p)$
is closely related with the category of finite-dimensional modules of $\overline{U}_q (sl_2)$
at $q = \exp(\pi \sqrt{-1} / p)$ for $p \ge 2$. 
More precisely, they determine the Grothendieck group and the center of $\overline{U}_q (sl_2)$.
Additionally they proved that the Grothendieck group, the center and 
a subspace of conformal blocks of $W(p)$,
which is invariant under the canonical $SL_2(\mathbb{Z})$ action,
are isomorphic each other.
Then it is also proved that, if $p=2$, the category of finite-dimensional modules of $\overline{U}_q (sl_2)$
and the category of modules of $\mathcal{W}(p)$ are equivalent each other.
Furthermore we can expect that the equivalence of these categories holds for any $p\ge2$.

In this paper we construct a basis of the space of symmetric linear functions of $\overline{U}_q (sl_2)$.
In order to construct a basis we determine certain basis of $\overline{U}_q (sl_2)$ which corresponding 
to indecomposable projective modules.
Since $\overline{U}_q(sl_2)$ is a finite-dimensional unimodular Hopf algebra and
the square of the antipode is inner, we can see that the space of symmetric linear functions
of $\overline{U}_q(sl_2)$ is isomorphic to the center by \cite{R}.
For $\overline{U}_q(sl_2)$ the center is $(3p-1)$-dimensional (see \cite{FGST1}) so we see that the space of
symmetric linear functions of $\overline{U}_q(sl_2)$ is also  $(3p-1)$-dimensional.
It also follows from \cite{R} that the linear functions given by the action of the balancing element of $\overline{U}_q(sl_2)$ to the
left and right integrals of the dual Hopf algebra of $\overline{U}_q(sl_2)$
are symmetric.
We determine the decomposition of this linear function
into the basis of the space of symmetric linear functions.

Our motivation to study symmetric linear functions comes from conformal field theories.
In the study of conformal field theory associated with a vertex operator algebra(VOA),
the representation theory of vertex operator algebras plays an important role.
In fact, the theory for any rational vertex operator algebra with the factorization property
is established over the projective line in \cite{NT}. 
This theory has been generalized to the higher genus case;
in particular, for an elliptic curve the space of conformal blocks with a vacuum module $V$
is nothing but the space of formal characters of modules for $V$ (cf. \cite{Z}).
Therefore its dimension coincides with the number of simple modules for $V$ up to
isomorphisms.

On the other hand, the theory for irrational VOAs is rather difficult. 
For example,
under the same finiteness conditions, it is shown that conformal blocks
are finite dimensional (see \cite{Mi} ). 
In this case the dimension is greater than the number of simple modules. 

There is an example of irrational conformal field theory
which is called logarithmic conformal field theory. 
A typical example of the theory is a VOA $\mathcal{W}(p)$, whose conformal blocks involve logarithmic function 
of modulus $q$ (recall that no logarithmic terms appear in rational cases).
In this example determining the dimension of conformal blocks is rather difficult.

By the discussions given in \cite{Mi} and \cite{NaTsu},
we can observe that the space of conformal blocks is isomorphic to the space of the symmetric linear functions
of a finite dimensional algebra whose category of modules is equivalent to the category
of $V$-modules (also see \cite{MNT}).
This observation naturally indicates that the space of conformal blocks is closely related with
the space of the symmetric linear functions of $\overline{U}_q(sl_2)$. 

This paper is organized as follows.
In section $2$ we review symmetric linear functions of an associative algebra and Integrals of Hopf algebras.
We also review the relationship given in \cite{R} between the space of symmetric linear functions of a finite-dimensional Hopf algebra and its center.

In section $3$ we review the definition of $\overline{U}_q(sl_2)$.
And we describe left and right integrals of $\overline{U}_q(sl_2)$, which turn out to be equal to each other,
hence $\overline{U}_q(sl_2)$ is unimodular.
And left and right integrals of the dual of $\overline{U}_q(sl_2)$ are determined.
Moreover we recall that the square of the antipode of $\overline{U}_q(sl_2)$ is inner.
The results above can be found in \cite{FGST1}.

In section $4$, by using the structure of projective modules of $\overline{U}_q(sl_2)$ in \cite{FGST1} and the result in \cite{R},
 we construct all indecomposable left ideals of $\overline{U}_q(sl_2)$
and we also construct all primitive idempotents of $\overline{U}_q(sl_2)$.

In section $5$ we give a basis of $\overline{U}_q(sl_2)$ which is the basis of indecomposable projective modules in $\overline{U}_q(sl_2)$
given by the actions of the generators of $\overline{U}_q(sl_2)$ to the primitive idempotents.
And we determine the multiplication rules among the basis.
By using the multiplication rules we construct a basis of symmetric linear functions of $\overline{U}_q(sl_2)$.
Moreover we determine the decomposition of the linear function given by 
the action of the balancing element to the left and right integrals into the basis.

\section{Preliminaries}
In this paper we will always work over the complex number field $\mathbb{C}$.
For any vector space $V$ we denote the space $\Hom_{\mathbb{C}}(V, \mathbb{C})$ by $V^*$. 
\subsection{Symmetric linear functions}
Let $A$ be a finite-dimensional associative algebra.
A symmetric linear function $\varphi$ is an element of $A^*$ which satisfies $\varphi(ab) = \varphi(ba)$
for all $a, b \in A$.
We denote the subspace of symmetric linear functions of $A^*$ by $\SLF(A)$.
If $A$ is a finite-dimensional Hopf algebra,
the space $\SLF(A)$ is equal to the space of cocommutative elements of $A^*$ (\cite{R}).

\subsection{Integrals and the square of antipode of Hopf algebras}
Let A be a finite-dimensional Hopf algebra with coproduct $\Delta$, counit $\varepsilon$ and antipode $S$.
Each of elements of the subspaces
\begin{align}
&\mathcal{L}_A = \{ \Lambda \in A | a \Lambda = \epsilon (a) \Lambda \ \text{for all} \ a \in A  \}, \notag\\
&\mathcal{R}_A = \{ \Lambda \in A |  \Lambda a = \epsilon (a) \Lambda \ \text{for all} \ a \in A  \}, \notag
\end{align}
is called a left integral and a right integral of $A$, respectively.
If $\mathcal{L}_A \not= \{ 0\}$ (respectively $\mathcal{R}_A \not= \{ 0 \}$) the space
$\mathcal{L}_A$ (respectively $\mathcal{R}_A$) is one-dimensional (cf. \cite{A}).
Similarly a left (respectively, right) integral of the dual Hopf algebra $H^*$ is an element $\lambda \in A^*$ which satisfies $p\lambda = p(1)\lambda$
(respectively, $\lambda p = p(1)\lambda$) for all $p \in A^*$.
Equivalently we can see
\begin{align}
&\mathcal{L}_{A^*} = \{ \lambda \in A^* | (1 \otimes \lambda) \Delta(x) = \lambda(x) \ \text{for all} \ x \in A \}, \notag\\
&\mathcal{R}_{A^*} = \{ \lambda \in A^* | (\lambda \otimes 1) \Delta(x) = \lambda(x) \ \text{for all} \ x \in A\}. \notag
\end{align}
If $\mathcal{L}_A = \mathcal{R}_A$ the Hopf algebra $A$ is called unimodular.
\begin{prop}[\cite{R}] \label{prop:center-integ}
Let $A$ be a finite-dimensional unimodular Hopf algebra with antipode $S$.
Suppose that $\lambda$ is the left integral of $H^*$ and that $\mu$ is the right integral of $H^*$. 
Then
\begin{enumerate}
\item
$\lambda (ab) = \lambda (bS^2(a))$,
\item
$\mu (ab) = \mu(S^2(b)a)$.
\end{enumerate}
\end{prop}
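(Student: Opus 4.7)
The plan is to derive both identities from the Frobenius algebra structure that a nonzero integral of $A^*$ induces on $A$. Concretely, I would show that the bilinear form $(a,b) \mapsto \lambda(ab)$ is nondegenerate, producing a unique algebra automorphism $\sigma$ of $A$ (the Nakayama automorphism) characterized by $\lambda(ab) = \lambda(b\,\sigma(a))$ for all $a,b$, and then identify $\sigma$ with $S^2$ using the unimodularity hypothesis.

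First I would verify that $\lambda$ equips $A$ with the structure of a Frobenius algebra. Since $\mathcal{L}_{A^*}$ is one dimensional and $\lambda \neq 0$, the linear map $A \to A^*$ sending $a$ to the functional $b \mapsto \lambda(ab)$ is an isomorphism; this is classical and relies on the duality between $\mathcal{L}_A$ and $\mathcal{L}_{A^*}$ for a finite-dimensional Hopf algebra, together with uniqueness (up to scalar) of nonzero integrals. Nondegeneracy of the associative pairing $(a,b) \mapsto \lambda(ab)$ follows at once, and general Frobenius theory then furnishes the desired algebra automorphism $\sigma$.

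The crux of the argument, and the main obstacle, is the identification $\sigma = S^2$. In full generality Radford's formula expresses $\sigma$ in terms of $S^2$ twisted by the distinguished grouplike elements $g \in A$ and $\alpha \in A^*$. Unimodularity forces $g = 1_A$, and one then checks by a short Hopf-algebra computation---combining the left integral identity
\[
\sum h_{(1)}\lambda(h_{(2)}) = \lambda(h)\,1_A
\]
with the antipode axioms $\sum x_{(1)}S(x_{(2)}) = \varepsilon(x) 1_A$ and the existence of a two-sided integral $\Lambda \in A$---that the residual $\alpha$-twist also collapses to the identity. This yields $\lambda(ab) = \lambda(b\,S^2(a))$ and establishes (1).

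Assertion (2) I would obtain by the parallel argument applied to the right integral $\mu$: the associated Nakayama automorphism is $S^{-2}$, so $\mu(ab) = \mu(b\,S^{-2}(a))$. Substituting $a \mapsto S^2(b)$ and $b \mapsto a$ in this identity (using that $S^2$ is an algebra automorphism) rewrites it as $\mu(ab) = \mu(S^2(b)\,a)$, as required. In both parts the Hopf-algebraic content is entirely concentrated in the Nakayama step; the remaining manipulations are routine once that step is in place.
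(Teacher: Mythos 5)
The paper does not prove this proposition at all---it is quoted from Radford \cite{R}---and the route you sketch (the integral $\lambda$ makes $A$ a Frobenius algebra, the Nakayama automorphism of the pairing $(a,b)\mapsto\lambda(ab)$ is $S^2$ up to a twist by distinguished grouplikes, and the hypothesis kills the twist) is exactly Radford's argument, so your architecture is the right one. There is, however, one genuine error in the key identification step: you have the two distinguished grouplike elements playing the wrong roles. In the Nakayama formula for $\lambda$ the only twist that occurs is by $\alpha\in A^*$, the grouplike defined by $\Lambda a=\alpha(a)\Lambda$ for $\Lambda$ a nonzero left integral in $A$; the element $g\in A$ (defined by $\lambda p=p(g)\lambda$) does not enter. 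Unimodularity of $A$ is, by definition, precisely the statement $\alpha=\varepsilon$---it says nothing about $g$, and indeed for $\overline{U}_q(sl_2)$ one has $g\neq1$, since $\lambda\neq\mu$ shows that $A^*$ is \emph{not} unimodular. So your claim that ``unimodularity forces $g=1_A$'' is false as stated, and the burden you then place on a ``short computation'' to collapse ``the residual $\alpha$-twist'' is misallocated: the vanishing of the $\alpha$-twist is not residual, it is the entire content of the hypothesis, and it holds by the definition of unimodular rather than by a computation with the antipode axioms. With the two grouplikes put in their correct places the argument closes as you intend.

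Your treatment of (2) is fine: from $\mu(ab)=\mu(bS^{-2}(a))$ the substitution $a\mapsto S^2(b)$, $b\mapsto a$ gives the stated form. Alternatively, (2) follows directly from (1): $\lambda\circ S$ is a nonzero right integral of $A^*$, hence proportional to $\mu$, and then $\mu(ab)$ is proportional to $\lambda(S(b)S(a))=\lambda\bigl(S(a)S^2(S(b))\bigr)=\lambda\bigl(S(S^2(b)a)\bigr)$, which is proportional to $\mu(S^2(b)a)$.
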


The square of the antipode is called inner if there exists an invertible element $t$
such that $S^2(x) = txt^{-1}$ for all $x \in A$.
\begin{prop}[\cite{R}] \label{prop:center-yaho}
Let $A$ be a finite-dimensional unimodular Hopf algebra with antipode $S$.
 If $S^2$ is inner,
the center $Z(A)$ of $A$ is isomorphic to $\SLF(A)$ as vector spaces.
\end{prop}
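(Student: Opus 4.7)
The plan is to exhibit an explicit vector space isomorphism $\Psi : A \to A^*$ coming from the integral $\lambda$ twisted by the invertible element implementing $S^2$, and then verify that $\Psi$ restricts to the desired isomorphism $Z(A) \cong \SLF(A)$ by identifying both sides with the orthogonal complement of $[A,A]$ under a symmetric, associative, non-degenerate bilinear form on $A$.

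First I would fix a non-zero left integral $\lambda \in \mathcal{L}_{A^*}$ and an invertible $t \in A$ with $S^2(x) = txt^{-1}$, and define $\tau \in A^*$ by $\tau(x) = \lambda(xt^{-1})$. The key preliminary calculation is that $\tau \in \SLF(A)$: using Proposition \ref{prop:center-integ}(1) with the pair $(b, at^{-1})$,
\[
\tau(ba) \;=\; \lambda(bat^{-1}) \;=\; \lambda\bigl(at^{-1} S^2(b)\bigr) \;=\; \lambda(at^{-1}\cdot tbt^{-1}) \;=\; \lambda(abt^{-1}) \;=\; \tau(ab).
\]
This is the only place where both unimodularity and the inner-ness of $S^2$ enter, and it is the main technical point of the proof. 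Setting $\beta(a,b) := \tau(ab)$ then yields a bilinear form on $A$ which is symmetric (by the computation above), associative (trivially, from associativity of $A$), and non-degenerate: $\beta(a, b) = 0$ for all $b$ forces $\lambda(a \cdot bt^{-1}) = 0$ for all $b$, and since $b \mapsto bt^{-1}$ is bijective and $\lambda$ is a Frobenius integral giving a non-degenerate pairing on $A$, this forces $a = 0$.

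Now define $\Psi : A \to A^*$ by $\Psi(a)(b) = \beta(a,b)$; non-degeneracy of $\beta$ and finite-dimensionality make this a linear isomorphism. Unwinding the definition of $\SLF(A)$, one sees immediately that $\Psi(a) \in \SLF(A)$ if and only if $\beta(a, xy) = \beta(a, yx)$ for all $x, y$, i.e.\ if and only if $a \in [A,A]^{\perp_\beta}$. Finally, associativity of $\beta$ rewrites $\beta(a, [x,y]) = \beta(a, xy) - \beta(a, yx) = \beta(ax, y) - \beta(xa, y) = \beta([a,x], y)$, so by non-degeneracy of $\beta$ in the second slot, $a \in [A,A]^{\perp_\beta}$ if and only if $[a, x] = 0$ for all $x \in A$, i.e.\ $a \in Z(A)$. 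Combining these identifications, $\Psi$ restricts to a vector space isomorphism $Z(A) \xrightarrow{\sim} \SLF(A)$. The main obstacle is the verification that $\tau$ is symmetric in Step 1; the remainder of the argument is the standard Frobenius-algebra pattern identifying central elements with those orthogonal to commutators.
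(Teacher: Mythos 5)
Your proposal is correct. The paper gives no proof of this proposition --- it is quoted from Radford's paper \cite{R} --- and your argument is precisely the standard one behind that citation: twist the integral by the element implementing $S^2$ to get a symmetric associative non-degenerate form $\beta(a,b)=\lambda(abt^{-1})$, then identify $Z(A)$ with $[A,A]^{\perp_\beta}$ and hence with $\SLF(A)$ under $a\mapsto\beta(a,\cdot\,)$; your symmetry computation $\tau(ba)=\lambda(at^{-1}S^2(b))=\tau(ab)$ is exactly how the paper later justifies that $g^{-1}\rightharpoonup\lambda$ is symmetric. The only ingredient you invoke without proof is the non-degeneracy of the pairing $(a,b)\mapsto\lambda(ab)$ coming from the integral (the Larson--Sweedler Frobenius property of finite-dimensional Hopf algebras), which is standard and appropriate to cite.
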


Denote by $\rightharpoonup$ and $\leftharpoonup$ the left and right actions of $A$ on $A^*$ defined by
\begin{align}
a \rightharpoonup p (b) = p(ba), \ p \leftharpoonup a (b) = p(ab), \notag 
\end{align}
for $a, b \in A$ and $p \in A^*$.

\section{The restricted quantum group $\overline{U}_q (sl_2)$}

\subsection{Definition}
Let $p \ge 2$ be a positive integer and $q = \exp(\pi \sqrt{-1} / p)$.
The restricted quantum group $\overline{U}_q (sl_2)$ is a Hopf algebra over $\mathbb{C}$
generated by $E, F, K$ and $K^{-1}$ with the relations
\begin{align}
&KK^{-1}=K^{-1}K=1, \notag\\
&KEK^{-1} = q^2 E, \ KFK^{-1} = q^{-2} F, \ [E, F] = \frac{K-K^{-1}}{q - q^{-1}},\notag\\
&E^p=F^p=0, \ K^{2p} = 1, \notag
\end{align}
as an algebra.
The coproduct $\Delta$, counit $\varepsilon$ and antipode $S$ are given by
\begin{align}
&\Delta (E) = 1 \otimes E + E \otimes K, \ \Delta (F) = K^{-1} \otimes F + F \otimes 1, \ \Delta (K) = K \otimes K, \notag\\
&\epsilon (E) = \epsilon (F) = 0,  \ \epsilon (K) = 1, \notag\\
&S(E) = -EK^{-1}, \ S(F) = -KF, \ S(K) = K^{-1}. \notag
\end{align}

\begin{lem}\label{lem:dim}
The $2p^3$ elements $E^m F^n K^{\ell}$, where $0 \le m, n \le p-1$ and $0 \le \ell \le 2p-1$,
form a basis of $\overline{U}_q (sl_2)$ as a vector space.
\end{lem}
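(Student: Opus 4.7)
The plan is to establish the two standard assertions for such a basis result: first that the proposed monomials span $\overline{U}_q (sl_2)$, and second that they are linearly independent.

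For the spanning part, I would proceed by a normal-form reduction. Any word in the generators $E$, $F$, $K$, $K^{-1}$ can be rewritten in the form $E^m F^n K^{\ell}$ with the prescribed ranges by repeatedly applying the defining relations. The relations $KE = q^2 EK$ and $KF = q^{-2}FK$ allow every occurrence of $K^{\pm 1}$ to be pushed to the right past all $E$'s and $F$'s, at the cost of scalar factors. The commutator relation $FE = EF - (K - K^{-1})/(q - q^{-1})$ permits each $E$ to be moved to the left past an $F$, introducing a strictly shorter term involving $K^{\pm 1}$. Iterating these moves on a given monomial and using induction on the total $E$-$F$ length (formally a diamond-lemma argument with a lexicographic order putting $E$'s leftmost, then $F$'s, then $K$'s) writes any element as a linear combination of the $E^m F^n K^{\ell}$. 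Finally, $E^p = 0$, $F^p = 0$ and $K^{2p} = 1$ restrict $0 \le m, n \le p-1$ and $0 \le \ell \le 2p-1$, yielding a spanning set of size at most $2p^3$.

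The linear independence is the real content. My preferred approach is to exhibit the monomials as linearly independent operators on a sufficiently large representation. A clean route is to identify $\overline{U}_q (sl_2)$ as a quotient of the De Concini--Kac form of $U_q(sl_2)$ at a root of unity, where the PBW theorem supplies a basis $E^a F^b K^c$ with $a,b \ge 0$ and $c \in \mathbb{Z}$, and to verify that the ideal generated by $E^p$, $F^p$ and $K^{2p} - 1$ is compatible with this PBW filtration, so no unintended collapse occurs among the representatives in the stated ranges. An alternative, more self-contained, approach is to build a faithful representation from the simple and projective modules of $\overline{U}_q (sl_2)$ classified in \cite{FGST1} (whose total dimensions already sum to $2p^3$ in the regular representation) and check by a direct highest-weight-type argument that the $2p^3$ monomials separate vectors in this module.

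The main obstacle is exactly this linear independence step; the spanning argument is routine rewriting, but pinning down the dimension requires either invoking the PBW theorem for the ambient unrestricted quantum group or producing an explicit faithful module of dimension $2p^3$. I expect the PBW-quotient route to give the shortest exposition, since once the compatibility of the defining ideal with the PBW filtration is checked, linear independence of the $E^m F^n K^{\ell}$ within the stated ranges is immediate.
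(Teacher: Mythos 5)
The paper states this lemma without proof, treating it as the standard PBW-type fact for $\overline{U}_q(sl_2)$ (it can be found in \cite{K} and \cite{FGST1}), so there is no in-paper argument to compare against; your outline is the standard one and is essentially sound. The spanning half is indeed routine rewriting as you describe. For linear independence, your first route (quotient of the unrestricted/De Concini--Kac form) is the right one, but the step you defer --- ``compatibility of the defining ideal with the PBW filtration'' --- is the entire content of the lemma, and you should say why it holds: at $q=\exp(\pi\sqrt{-1}/p)$ one has $q^{2p}=1$ and $[p]=0$, so $E^p$, $F^p$ and $K^{2p}-1$ are \emph{central} (e.g.\ $KE^pK^{-1}=q^{2p}E^p=E^p$ and $[E^p,F]=[p]E^{p-1}(q^{p-1}K-q^{-(p-1)}K^{-1})/(q-q^{-1})=0$, as in Lemma \ref{lem:commutation} extended to $m=p$). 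Centrality immediately shows that the two-sided ideal they generate is spanned by the PBW monomials $E^aF^bK^c$ with $a\ge p$ or $b\ge p$ together with $E^aF^bK^c(K^{2p}-1)$, which is a graded complement to the span of the $2p^3$ representatives in the stated ranges; without this observation the ``no unintended collapse'' claim is unsupported. Your second route, by contrast, is circular as phrased: the simple and projective modules of $\overline{U}_q(sl_2)$ and the fact that their multiplicities in the regular representation account for $2p^3$ dimensions are established (both in \cite{FGST1} and in Sections 4--5 of this paper) \emph{using} this basis, so they cannot be used to prove it. I would drop that alternative and fill in the centrality remark in the first route.
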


For $m, n \in \mathbb{Z}$ we use the standard notation
\begin{align}
&[n] = \frac{q^n - q^{-1}}{q - q^{-1}}, \notag\\
&[n]! = [n] [n-1] \dotsm [2] [1], \ [0]! = 1, \notag\\
&\begin{bmatrix}
m \\ n 
\end{bmatrix}
= \frac{[m]!}{[n]![m-n]!} \ \text{for} \  n \ge 0  \ \text{and} \  m - n \ge 0. \notag
\end{align}

We can write down the coproduct of the basis of $\overline{U}_q (sl_2)$ as in Lemma \ref{lem:dim}
by using induction.
\begin{lem}\label{lem:coproduct}
\begin{align}
\Delta( E^m F^n K^{\ell}) =& \sum_{r=0}^{m} \sum_{s=0}^{n} q^{r(m-r) + s(n-s) - 2rs}
\begin{bmatrix}
m \\ n
\end{bmatrix}
\begin{bmatrix}
n \\ s
\end{bmatrix} \notag\\
& \qquad \times E^r F^{n-s} K^{-s+\ell} \otimes E^{m-r} F^s K^{r+\ell}. \notag
\end{align}
\end{lem}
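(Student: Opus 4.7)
The plan is to exploit that $\Delta$ is an algebra homomorphism and write $\Delta(E^m F^n K^\ell) = \Delta(E)^m \Delta(F)^n \Delta(K)^\ell$, compute each of the three factors separately, and then multiply, using only the relations $KE = q^2 EK$ and $KF = q^{-2}FK$ to reorganize the product into the stated form.

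First I would establish by induction on $m$ (respectively $n$) the auxiliary formulas
\[
\Delta(E^m) = \sum_{r=0}^m q^{r(m-r)} \begin{bmatrix} m \\ r \end{bmatrix} E^r \otimes E^{m-r}K^r,
\]
\[
\Delta(F^n) = \sum_{s=0}^n q^{s(n-s)} \begin{bmatrix} n \\ s \end{bmatrix} F^{n-s}K^{-s} \otimes F^s.
\]
The two summands of $\Delta(E)$ satisfy $(E \otimes K)(1 \otimes E) = q^2 (1 \otimes E)(E \otimes K)$, and the two summands of $\Delta(F)$ satisfy $(K^{-1} \otimes F)(F \otimes 1) = q^2 (F \otimes 1)(K^{-1} \otimes F)$. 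In both cases the inductive step reduces, after commuting the newly introduced $K$ past a power of $E$ or $F$, to the $q$-Pascal identity
\[
\begin{bmatrix} m+1 \\ r \end{bmatrix} = q^{r}\begin{bmatrix} m \\ r \end{bmatrix} + q^{r-m-1}\begin{bmatrix} m \\ r-1 \end{bmatrix},
\]
which is immediate from the definition of $\begin{bmatrix} m \\ r \end{bmatrix}$ and the symmetric convention $[n] = (q^n - q^{-n})/(q - q^{-1})$.

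Next I would multiply the three coproducts. Term by term,
\[
(E^r \otimes E^{m-r}K^r)(F^{n-s}K^{-s} \otimes F^s)(K^\ell \otimes K^\ell) = E^r F^{n-s} K^{\ell-s} \otimes E^{m-r} K^r F^s K^\ell,
\]
and the only remaining reordering is $K^r F^s = q^{-2rs} F^s K^r$, which contributes exactly the factor $q^{-2rs}$. Summing over $r$ and $s$ and collecting $q$-coefficients yields the total exponent $r(m-r)+s(n-s)-2rs$ and the two binomial coefficients, matching the statement.

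The main obstacle is the inductive verification of the $q$-binomial expansions for $\Delta(E^m)$ and $\Delta(F^n)$: this step requires careful bookkeeping of the powers of $q$ generated when $K$ is commuted past powers of $E$ or $F$ in the inductive step, together with checking that the resulting identity agrees with the correct Pascal identity for the symmetric $q$-numbers used in the paper. Once those expansions are in hand, the final assembly is routine.
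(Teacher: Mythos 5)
Your proof is correct and is essentially the fleshed-out version of the induction the paper merely asserts: the factorization $\Delta(E^mF^nK^{\ell})=\Delta(E)^m\Delta(F)^n\Delta(K)^{\ell}$, the two $q$-binomial expansions (whose inductive steps do reduce to exactly the symmetric $q$-Pascal identity you quote), and the single commutation $K^rF^s=q^{-2rs}F^sK^r$ reproduce the stated exponent $r(m-r)+s(n-s)-2rs$ and both Gaussian coefficients. Note only that your computation correctly yields the $q$-binomial coefficient with lower index $r$ in the first factor, whereas the paper's display contains the typo with lower index $n$.
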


\subsection{The integrals}
The integral of $\overline{U}_q (sl_2)$ and the right integral of $\overline{U}_q (sl_2)^*$ is given in \cite{FGST1}.
The basis of the space of left integrals is given by 
\begin{align}
E^{p-1}F^{p-1} \sum_{\ell = 0}^{2p-1} K^{\ell}, \notag
\end{align}
which also belongs to the space of right integrals.
Therefore we can see that $\overline{U}_q (sl_2)$ is unimodular.

Define the elements in $\overline{U}_q (sl_2)^*$ by
\begin{align}
&\lambda (E^m F^n K^{\ell}) = \delta_{m, p-1} \delta_{n, p-1} \delta_{\ell, p-1}, \notag\\
&\mu (E^m F^n K^{\ell}) = \delta_{m, p-1} \delta_{n, p-1} \delta_{\ell, p+1}. \notag
\end{align}

\begin{prop}\label{prop:integ}
Each of the spaces of left integrals and right integrals of the dual Hopf algebra of $\overline{U}_q (sl_2)$
is spanned by $\lambda$ and $\mu$ respectively.
\end{prop}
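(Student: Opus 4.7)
The plan is to verify directly that $\lambda$ satisfies the left-integral condition and $\mu$ the right-integral condition, then appeal to the general fact (already noted in the excerpt) that the spaces $\mathcal{L}_{A^*}$ and $\mathcal{R}_{A^*}$ are each at most one-dimensional, so that these nonzero functions automatically span. Concretely, I would check
\[
(1\otimes\lambda)\Delta(x)=\lambda(x)\,1,\qquad (\mu\otimes 1)\Delta(x)=\mu(x)\,1,
\]
on the PBW-type basis $x=E^{a}F^{b}K^{c}$ from Lemma~\ref{lem:dim}, using the explicit coproduct of Lemma~\ref{lem:coproduct}.

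For $\lambda$, upon applying $(1\otimes\lambda)$ to the double sum expressing $\Delta(E^{a}F^{b}K^{c})$, the factor $\lambda(E^{a-r}F^{s}K^{c+r})$ forces the three conditions $a-r=p-1$, $s=p-1$, and $c+r\equiv p-1\pmod{2p}$. Combined with the basis ranges $0\le a,b\le p-1$ and $0\le c\le 2p-1$, these pin down $(a,b,c)=(p-1,p-1,p-1)$ together with the unique surviving summand $(r,s)=(0,p-1)$. A short check of the $q$-coefficient, the two $q$-binomials, and the first tensor factor then shows the whole sum collapses to $K^{0}=1$, which matches $\lambda(E^{p-1}F^{p-1}K^{p-1})\cdot 1$.

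The verification for $\mu$ is entirely parallel: the $\delta$-conditions now isolate $r=p-1$, $s=0$, $a=b=p-1$, and $c=p+1$, and the surviving term is $K^{2p}=1$, matching $\mu(E^{p-1}F^{p-1}K^{p+1})=1$. Since $\lambda$ and $\mu$ are visibly nonzero and each integral space is one-dimensional, they span as claimed. There is no conceptual obstacle; the main thing requiring care is the bookkeeping, namely ensuring that the three Kronecker deltas on $(m,n,\ell)$ really do isolate a single term in the double sum indexed by $(r,s)$, so that one does not end up with an unexpected factor of $q$ or an extra power of $K$ in the surviving summand.
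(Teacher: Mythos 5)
Your proof is correct and takes essentially the same approach as the paper: the paper's entire proof of this proposition is the single line ``It follows from Lemma \ref{lem:coproduct},'' and your verification is precisely the computation that line leaves implicit. The bookkeeping you describe checks out --- the Kronecker deltas force $(m,n,\ell)=(p-1,p-1,p-1)$ with surviving summand $(r,s)=(0,p-1)$ for $\lambda$, and $(m,n,\ell)=(p-1,p-1,p+1)$ with $(r,s)=(p-1,0)$ for $\mu$, the remaining tensor factor collapsing to $1$ in each case, and one-dimensionality of the integral spaces of a finite-dimensional Hopf algebra finishes the argument.
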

\begin{proof}[\rm Proof]
It follows from Lemma \ref{lem:coproduct}.
\end{proof}

\subsection{The square of the antipode}
$\overline{U}_q (sl_2)$ is not quasitriangular but there exists the Hopf algebra $\bar{D}$ which contains 
$\overline{U}_q (sl_2)$ as a subalgebra and which is a ribbon quasitriangular Hopf algebra (see \cite{FGST1}).
It is also shown that the Drinfeld element and the ribbon element 
of $\bar{D}$ belong to $\overline{U}_q (sl_2)$ in \cite{FGST1}.
Thus the balancing element of $\bar{D}$ is also in $\overline{U}_q (sl_2)$.

\begin{prop}[\cite{FGST1}] \label{prop:square}
The square of the antipode of $\overline{U}_q (sl_2)$ is inner, in particular,
$S^2(x)=gxg^{-1}$ for all $x \in \overline{U}_q (sl_2)$
where $g=K^{p+1}$.
\end{prop}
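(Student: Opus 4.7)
The strategy is to verify the identity $S^2(x) = gxg^{-1}$ on the algebra generators $E, F, K, K^{-1}$ and then use the fact that both sides are algebra endomorphisms of $\overline{U}_q(sl_2)$ to conclude the identity globally. Conjugation by $g$ is obviously an automorphism, and $S^2$ is an automorphism since $S$ is an anti-automorphism.

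First I would compute $S^2$ on the generators using anti-multiplicativity of $S$ together with $S(E) = -EK^{-1}$, $S(F) = -KF$, $S(K) = K^{-1}$:
\begin{align*}
S^2(E) &= S(-EK^{-1}) = -S(K^{-1})S(E) = -K(-EK^{-1}) = KEK^{-1} = q^2 E,\\
S^2(F) &= S(-KF) = -S(F)S(K) = -(-KF)K^{-1} = KFK^{-1} = q^{-2} F,\\
S^2(K) &= S(K^{-1}) = K.
\end{align*}

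Next I would compute the conjugation by $g = K^{p+1}$. Clearly $gKg^{-1} = K$. For $E$ and $F$, iterating $KEK^{-1} = q^2 E$ and $KFK^{-1} = q^{-2}F$ gives
\begin{align*}
gEg^{-1} &= K^{p+1}EK^{-(p+1)} = q^{2(p+1)} E,\\
gFg^{-1} &= K^{p+1}FK^{-(p+1)} = q^{-2(p+1)} F.
\end{align*}
The critical input is that $q = \exp(\pi\sqrt{-1}/p)$ satisfies $q^{2p} = 1$, so $q^{2(p+1)} = q^2$ and $q^{-2(p+1)} = q^{-2}$. Hence $gEg^{-1} = q^2 E$ and $gFg^{-1} = q^{-2} F$, matching $S^2(E)$ and $S^2(F)$ exactly.

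Since the two algebra automorphisms $S^2$ and $x \mapsto gxg^{-1}$ coincide on a generating set, they are equal on all of $\overline{U}_q(sl_2)$. Invertibility of $g$ is automatic from $K^{2p} = 1$, which gives $g^{-1} = K^{p-1}$. There is no real obstacle to this argument; the one point worth flagging is the reliance on $q^{2p}=1$, which is exactly the restricted-root-of-unity feature that makes the integer power $K^{p+1}$ serve as a balancing element, whereas in the generic $U_q(sl_2)$ one would need a formal element implementing $q^2$-conjugation.
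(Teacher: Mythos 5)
Your proof is correct and complete: the paper itself states this proposition as a citation to [FGST1] without giving a proof, and your verification on the generators $E$, $F$, $K^{\pm 1}$ is the standard argument. All the computations check out — $S^2(E)=q^2E$, $S^2(F)=q^{-2}F$, $S^2(K)=K$, and conjugation by $g=K^{p+1}$ reproduces these precisely because $q^{2p}=1$ — and you correctly identify that agreement of two algebra automorphisms on a generating set suffices.
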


Then $g^{-1} \rightharpoonup \lambda$ and $\mu \leftharpoonup g$ is in $\SLF(\overline{U}_q (sl_2))$
by Proposition \ref{prop:center-integ}.
Note that 
\begin{align}
g^{-1} \rightharpoonup \lambda (E^m F^n K^{\ell}) = \mu \leftharpoonup g (E^m F^n K^{\ell}) = \delta_{m, p-1} \delta_{n, p-1} \delta_{\ell, 0}. \label{eq:integrals}
\end{align}

\subsection{Irreducible modules}
The irreducible modules $\mathcal{X}_s^{\alpha}$ are labeled by $\alpha = \pm$ and $1 \le s \le p$.
The irreducible module $\mathcal{X}_s^{\alpha}$ is spanned by weight vectors $a_n^{\alpha}(s)$,
$1 \le n \le s-1$ with the action of $\overline{U}_q (sl_2)$ defined by
\begin{align}
K a_n^{\alpha}(s) =& \alpha q^{s-1-2n}a_n^{\alpha}(s), \notag\\
E a_n^{\alpha}(s) =& \alpha [n] [s-n]  a_{n-1}^{\alpha}(s),\notag\\
F a_n^{\alpha}(s) =&   a_{n+1}^{\alpha}(s), \notag
\end{align}
where $a_{-1}^{\alpha}(s) = a_{s}^{\alpha}(s) = 0$.

\subsection{Casimir element}
The Casimir element of $\overline{U}_q (sl_2)$ is given by 
\begin{align}
C = EF + \frac{q^{-1}K + q K^{-1}}{(q-q^{-1})^2} \in Z(\overline{U}_q (sl_2)). \label{def:casimir}
\end{align}

\begin{prop}[\cite{FGST1}]
The minimal polynomial relation of Casimir element is
\begin{align}
\Phi_p(x) = (x-\beta_0)(x-\beta_p) \prod_{s=1}^{p-1} (x - \beta_s)^2 \label{eq:minimal}
\end{align}
where $\beta_s = \frac{q^s+q^{-j}}{(q-q^{-1})^2}$.
\end{prop}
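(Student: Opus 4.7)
The plan is to work module-theoretically, using the centrality of $C$: its minimal polynomial in $\overline{U}_q(sl_2)$ equals the least common multiple of its minimal polynomials on the indecomposable projective summands of the regular representation. First, applying the Casimir formula \eqref{def:casimir} to the highest weight vector $a_0^{\alpha}(s)$ and using $Ea_0^{\alpha}(s)=0$, $Fa_0^{\alpha}(s)=a_1^{\alpha}(s)$, and $Ea_1^{\alpha}(s)=\alpha[s-1]a_0^{\alpha}(s)$ yields, after a short manipulation, $Ca_0^{\alpha}(s)=\alpha\beta_s\,a_0^{\alpha}(s)$; by Schur's lemma $C$ then acts as the scalar $\alpha\beta_s$ on all of $\mathcal{X}_s^{\alpha}$. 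Since $q^p=-1$ gives $\beta_p=-\beta_0$ and $\beta_{p-s}=-\beta_s$, the set of $C$-eigenvalues on simples is exactly $\{\beta_0,\beta_1,\ldots,\beta_p\}$: the values $\beta_0$ and $\beta_p$ arise only from $\mathcal{X}_p^{-}$ and $\mathcal{X}_p^{+}$, while each $\beta_s$ with $1\le s\le p-1$ is realized on both $\mathcal{X}_s^+$ and $\mathcal{X}_{p-s}^-$.

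Next I would invoke the structure of the indecomposable projective modules from \cite{FGST1}. The Steinberg projectives $\mathcal{P}_p^{\pm}=\mathcal{X}_p^{\pm}$ are simple, contributing the factors $(x-\beta_p)$ and $(x-\beta_0)$. For $1\le s\le p-1$, $\mathcal{P}_s^{\alpha}$ has Loewy length three, with head and socle both $\mathcal{X}_s^{\alpha}$ and middle layer $\mathcal{X}_{p-s}^{-\alpha}\oplus\mathcal{X}_{p-s}^{-\alpha}$; in particular every composition factor has $C$-eigenvalue $\alpha\beta_s$, so $C-\alpha\beta_s$ is a nilpotent module endomorphism. The key point is that $\dim\Hom_{\overline{U}_q(sl_2)}(\mathcal{P}_s^{\alpha},\mathcal{P}_s^{\alpha})$ equals the composition multiplicity $[\mathcal{P}_s^{\alpha}:\mathcal{X}_s^{\alpha}]=2$, so this indecomposable projective has a local endomorphism algebra of dimension two, necessarily isomorphic to $\mathbb{C}[t]/(t^2)$. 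Every nilpotent endomorphism of $\mathcal{P}_s^{\alpha}$ therefore squares to zero, giving $(C-\alpha\beta_s)^2=0$. Combined with non-triviality of $C-\alpha\beta_s$ on $\mathcal{P}_s^{\alpha}$, the minimal polynomial of $C$ on $\mathcal{P}_s^{\alpha}$ is exactly $(x-\alpha\beta_s)^2$; as $(s,\alpha)$ runs over $\{1,\ldots,p-1\}\times\{\pm\}$ the resulting double factors $(x-\beta_s)^2$ and $(x-\beta_{p-s})^2$ merge in the least common multiple into $\prod_{s=1}^{p-1}(x-\beta_s)^2$. Assembling the contributions produces $\Phi_p(x)$, and minimality is immediate from this block-by-block analysis: dropping $(x-\beta_0)$ or $(x-\beta_p)$ fails on $\mathcal{X}_p^{\mp}$, and reducing any double factor to a single factor fails on the corresponding $\mathcal{P}_s^{\alpha}$.

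The main obstacle will be verifying that $C-\alpha\beta_s$ is itself nonzero on $\mathcal{P}_s^{\alpha}$: while nilpotency and squaring to zero follow cleanly from the endomorphism-algebra dimension count, the non-triviality requires producing a specific weight vector $v\in\mathcal{P}_s^{\alpha}$ with $(C-\alpha\beta_s)v\ne 0$. This uses the explicit action of $E,F,K$ on $\mathcal{P}_s^{\alpha}$ from \cite{FGST1}---equivalently the primitive-idempotent realization developed in Section 4 of the present paper---where one checks that $EF$ applied to a head generator $x_0$ acquires a nonzero ``radical correction'' beyond the scalar $\alpha[s-1]x_0$ that it would give in the simple quotient $\mathcal{X}_s^{\alpha}$.
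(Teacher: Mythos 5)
The paper offers no proof of this proposition---it is simply quoted from \cite{FGST1}---so there is no in-text argument to compare yours against; judged on its own, your proof is correct and essentially complete. The computation $Ca_0^{\alpha}(s)=\alpha\beta_s\,a_0^{\alpha}(s)$ with $\beta_s=\frac{q^s+q^{-s}}{(q-q^{-1})^2}$ (note the statement's $q^{-j}$ is a typo for $q^{-s}$), the identities $\beta_p=-\beta_0$ and $\beta_{p-s}=-\beta_s$ coming from $q^p=-1$, the reduction of the minimal polynomial of a central element to the least common multiple over indecomposable projective summands of the regular module, and the bound $(C-\alpha\beta_s)^2=0$ obtained from $\dim\Hom_{\overline{U}_q(sl_2)}(\mathcal{P}_s^{\alpha},\mathcal{P}_s^{\alpha})=[\mathcal{P}_s^{\alpha}:\mathcal{X}_s^{\alpha}]=2$ together with locality of the endomorphism ring are all sound. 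Two small points should be added to close the argument. First, record that $\beta_0,\dots,\beta_p$ are pairwise distinct (immediate since $\beta_s$ is proportional to $\cos(s\pi/p)$, which is strictly monotone on $0\le s\le p$); without this the least common multiple need not be the displayed product and the block-by-block minimality argument would not separate the factors. Second, the non-vanishing of $C-\alpha\beta_s$ on $\mathcal{P}_s^{\alpha}$, which you flag as the main obstacle, is a one-line check from the relations of Proposition \ref{prop:projective}: for the head generator, which in this paper's notation is $b_0^{\alpha}(s)$ rather than $x_0$ (here $x_k^{\alpha}(s)$ denotes a middle-layer vector), one has $EFb_0^{\alpha}(s)=Eb_1^{\alpha}(s)=\alpha[1][s-1]\,b_0^{\alpha}(s)+a_0^{\alpha}(s)$ for $s\ge 2$, and $EFb_0^{\alpha}(1)=Ey_0^{\alpha}(1)=a_0^{\alpha}(1)$ for $s=1$, so in all cases $(C-\alpha\beta_s)b_0^{\alpha}(s)=a_0^{\alpha}(s)\neq 0$---exactly the ``radical correction'' you predict.
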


This relation gives the decomposition of $\overline{U}_q (sl_2)$ into its subalgebras by Casimir element.
\begin{align}
\overline{U}_q (sl_2) = \bigoplus_{s=0}^{p} Q_s, \label{directsum:1}
\end{align} 
where $Q_s$ for $0 \le s \le p$ is generalized eigenspace of eigenvalue $\beta_s$.

\section{Idempotents of $\overline{U}_q (sl_2)$}

In this section we construct primitive idempotens of $\overline{U}_q (sl_2)$ by referring to the structure of projective modules (see \cite{FGST1}).
We construct all projective modules in $\overline{U}_q (sl_2)$ by $E$, $F$ and $K^{\pm1}$.

\subsection{Indecomposable modules in $\overline{U}_q (sl_2)$}
In order to construct projective modules in $\overline{U}_q (sl_2)$
first we construct the module whose socle is the irreducible module $\mathcal{X}_s^{\alpha}$.

Now we introduce the useful lemma.
\begin{lem}[\cite{K}]\label{lem:commutation}
For $1 \leq m \leq p - 1$, The following relations hold in $\overline{U}_q(sl_2)$:
\begin{align}
[E,F^m]  & =   [m]F^{m-1}\frac{q^{-(m - 1)}K - q^{m - 1}K^{-1}}{q - q^{-1}} \notag\\
         & =   [m]\frac{q^{m - 1}K - q^{- (m - 1)}K^{-1}}{q - q^{-1}}F^{m - 1} \notag\\
[E^m,F]  & =   [m]E^{m-1}\frac{q^{m - 1}K - q^{-(m - 1)}K^{-1}}{q - q^{-1}} \notag\\
         & =   [m]\frac{q^{-(m - 1)}K - q^{ m - 1}K^{-1}}{q - q^{-1}}E^{m - 1} \notag
\end{align}
\end{lem}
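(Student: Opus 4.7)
The plan is to prove the four identities by induction on $m$, handling the $[E,F^m]$ pair first and deducing the $[E^m,F]$ pair by a symmetry argument. The base case $m=1$ is, in both forms, exactly the defining relation $[E,F] = (K-K^{-1})/(q-q^{-1})$.

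For the inductive step on $[E,F^m]$, I would use the general derivation identity
\begin{equation*}
[E, F^{m+1}] = [E,F]\,F^m + F\,[E,F^m],
\end{equation*}
which holds in any associative algebra. Substituting the inductive hypothesis in the form with $F^{m-1}$ on the left absorbs the leading $F$ into a single $F^m$, and in the term $[E,F]F^m$ I would commute $K^{\pm1}$ past $F^m$ using $KF^m = q^{-2m}F^m K$ and $K^{-1}F^m = q^{2m}F^m K^{-1}$. Comparing the resulting coefficients to the desired right-hand side $[m+1]F^m(q^{-m}K - q^m K^{-1})/(q-q^{-1})$ reduces the step to the elementary $q$-number identities $q^{-2m} + [m]q^{-(m-1)} = [m+1]q^{-m}$ and $q^{2m} + [m]q^{m-1} = [m+1]q^m$, each of which follows in one line from the definition of $[n]$. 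The second form of the identity (with $F^{m-1}$ on the right instead of the left) then follows at once from the first by the relation $KF^{m-1} = q^{-2(m-1)}F^{m-1}K$, which is precisely what swaps the two $\pm(m-1)$ exponents in the numerator.

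To avoid repeating the induction for $[E^m,F]$, I would exploit the algebra automorphism $\phi \colon \overline{U}_q(sl_2) \to \overline{U}_q(sl_2)$ defined on generators by $E \mapsto F$, $F \mapsto E$, $K \mapsto K^{-1}$; a direct check against the defining relations $KEK^{-1} = q^2 E$, $KFK^{-1} = q^{-2}F$ and $[E,F] = (K-K^{-1})/(q-q^{-1})$ confirms that $\phi$ is well defined. Applying $\phi$ to the two $[E,F^m]$ identities already established and using $\phi([E,F^m]) = [F,E^m] = -[E^m,F]$, the signs of the $K$ and $K^{-1}$ terms flip and the two desired forms for $[E^m,F]$ emerge after relabelling.

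The main obstacle is purely bookkeeping: one must keep careful track of the exponents of $q$ when commuting $K^{\pm 1}$ past powers of $F$ (or $E$), and verify the two $[m+1]$ recurrences in the inductive step without sign errors. No conceptual input beyond the defining relations and the automorphism $\phi$ is needed.
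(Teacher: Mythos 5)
Your proof is correct. The paper itself gives no argument for this lemma --- it is simply quoted from Kassel's book --- and your induction via $[E,F^{m+1}]=[E,F]F^m+F[E,F^m]$, together with the $q$-integer identities $q^{-2m}+[m]q^{-(m-1)}=[m+1]q^{-m}$ and $q^{2m}+[m]q^{m-1}=[m+1]q^{m}$ and the swap automorphism $E\mapsto F$, $F\mapsto E$, $K\mapsto K^{-1}$, is exactly the standard textbook argument; all four stated forms come out with the correct signs and exponents.
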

Then we have a generalization of this lemma (cf. \cite{L}, \cite{S}).
\begin{lem}\label{lem:genera}
\begin{align}
[E^r, F^s] = \sum_{i=1}^{\min(r,s)} E^{r-i}F^{s-i} f^{r,s}_i(K), \notag
\end{align}
where $f^{r,s}_i(z) \in \mathbb{C}[z, z^{-1}]$.
\end{lem}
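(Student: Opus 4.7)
The plan is to prove the statement by induction on $r+s$, with Lemma \ref{lem:commutation} providing the base cases and the Leibniz identity
\begin{equation}
[E^r, F^s] = [E^r, F^{s-1}]\, F + F^{s-1}\, [E^r, F] \notag
\end{equation}
driving the inductive step. The case $\min(r,s)=0$ is trivial (both sides vanish), and if $r=1$ or $s=1$ the required expression is exactly one of the formulas in Lemma \ref{lem:commutation}, since $[E, F^s]$ and $[E^r, F]$ each already have the shape $E^{r-1}F^{s-1}\cdot(\text{Laurent polynomial in }K)$.

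For the inductive step, assume $r, s \ge 2$ and that the lemma holds for all pairs with smaller sum. Using the Leibniz identity above, I would handle the two summands separately. The first summand is $[E^r, F^{s-1}]\, F$; by the induction hypothesis it equals $\sum_{i=1}^{\min(r,s-1)} E^{r-i} F^{s-1-i} f^{r,s-1}_i(K)\, F$. The commutation relation $KF = q^{-2}FK$ implies that any Laurent polynomial $f(K)$ satisfies $f(K)F = F\,f(q^{-2}K)$, so each term rearranges to $E^{r-i}F^{s-i} f^{r,s-1}_i(q^{-2}K)$, which already has the desired form. For the second summand, Lemma \ref{lem:commutation} gives $[E^r, F] = [r]\,E^{r-1}\,h(K)$ for an explicit Laurent polynomial $h$; one then rewrites $F^{s-1}E^{r-1} = E^{r-1}F^{s-1} - [E^{r-1}, F^{s-1}]$ and applies the induction hypothesis to $[E^{r-1}, F^{s-1}]$, which is legal since $(r-1)+(s-1) < r+s$. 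The piece $[r]\,E^{r-1}F^{s-1}\,h(K)$ (after again sliding $h(K)$ through the $F$'s using $KF=q^{-2}FK$) supplies the $i=1$ contribution, while each term $E^{(r-1)-j}F^{(s-1)-j}\,f^{r-1,s-1}_j(K)\,h(K)$ contributes to the $i=j+1$ term once the two Laurent polynomials in $K$ are multiplied and pushed past the remaining $F$'s.

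Collecting the contributions from both summands, reindexing, and combining the Laurent polynomial coefficients yields an expression of the desired form $\sum_{i=1}^{\min(r,s)} E^{r-i}F^{s-i} f^{r,s}_i(K)$ with $f^{r,s}_i \in \mathbb{C}[z, z^{-1}]$. There is no real analytical obstacle here; the only structural point worth noting is that induction on $s$ alone does not close, because rewriting $F^{s-1}E^{r-1}$ in the second summand produces a commutator involving \emph{both} smaller $r$ and smaller $s$. This forces either induction on $r+s$ or a simultaneous double induction, and the remainder of the argument is a routine, if slightly tedious, exercise in tracking how Laurent polynomials in $K$ transform under the relations $KE = q^2 EK$ and $KF = q^{-2}FK$.
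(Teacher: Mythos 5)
Your argument is correct. The paper itself offers no proof of this lemma --- it simply states it as "a generalization" of Lemma \ref{lem:commutation} with a citation to Lusztig and Suter --- so your write-up actually supplies a proof where the paper supplies none. The induction on $r+s$ via the Leibniz identity $[E^r,F^s]=[E^r,F^{s-1}]F+F^{s-1}[E^r,F]$ is the natural route, and the two points you flag are exactly the ones that matter: (i) Laurent polynomials in $K$ slide past powers of $F$ (and $E$) at the cost of rescaling the variable, since $f(K)F=Ff(q^{-2}K)$, which keeps every term in the normal form $E^{r-i}F^{s-i}f(K)$; and (ii) rewriting $F^{s-1}E^{r-1}=E^{r-1}F^{s-1}-[E^{r-1},F^{s-1}]$ forces the induction to decrease both exponents, so single induction on $s$ alone does not close and one must induct on $r+s$ (or doubly). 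One tiny quibble: in the second summand the factor $h(K)$ coming from $[E^r,F]=[r]E^{r-1}h(K)$ already sits to the right of everything after you substitute for $F^{s-1}E^{r-1}$, so no further sliding is needed there; this is cosmetic and does not affect correctness. Your bookkeeping of the index ranges ($i\le\min(r,s-1)$ from the first summand, $i\le\min(r-1,s-1)+1=\min(r,s)$ from the second) correctly produces the stated upper limit $\min(r,s)$.
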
 

For $1 \le s, t \le p$ and $\alpha=\pm$, we set
\begin{align}
v^{\alpha}(s, t) = \sum_{\ell=0}^{2p-1} \left(\alpha q^{-(s-2t+1)}\right)^{\ell} K^{\ell}. \notag
\end{align}
Then we can see that $K v^{\alpha}(s, t) = \alpha q^{s-2t+1} v^{\alpha}(s, t)$.

Define
\begin{align}
a_0^{\alpha}(s, t) = E^{p-1}F^{p-t}v^{\alpha}(s, t).
\end{align}
This element in $\overline{U}_q (sl_2)$ is a highest weight vector of highest weight $\alpha q^s-1$.

\begin{lem}\label{lem:1}
Set $a_n^{\alpha}(s, t) = F^{n} a_0^{\alpha}(s, t)$.
\begin{align}
a_n^{\alpha}(s, t) = \sum_{\ell =0}^{n} \lambda_{\ell, n}^{\alpha}(s) E^{p-1-\ell}F^{p-t+n-\ell} v^{\alpha}(s, t) \notag
\end{align}
and
\begin{align}
&\lambda_{\ell, n}^{\alpha}(s) = \lambda_{\ell, n-1}^{\alpha}(s) + \alpha[n][s-2n+\ell]\lambda_{\ell-1, n-1}^{\alpha}(s),\quad \text{for} 
\quad 1 \le \ell \le n-1, \notag\\
&\lambda_{n, n}^{\alpha}(s) = \alpha [n][s-n]\lambda_{n-1, n-1}^{\alpha}(s), \notag\\
&\lambda_{0, n}^{\alpha}(s) = \lambda_{0, n-1}^{\alpha}(s). \notag
\end{align}
In particular $\lambda_{n, n}^{\alpha}(s)=\prod_{i=1}^{n}(\alpha[n][s-n])$.
\end{lem}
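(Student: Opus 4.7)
The plan is to prove the expansion formula and the recurrences simultaneously by induction on $n$. The base case $n = 0$ is the definition $a_0^{\alpha}(s,t) = E^{p-1}F^{p-t}v^{\alpha}(s,t)$, which forces $\lambda_{0,0}^{\alpha}(s) = 1$ and leaves nothing to verify.

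For the inductive step, I would assume the expansion at level $n-1$ and compute $a_n^{\alpha}(s,t) = F\cdot a_{n-1}^{\alpha}(s,t)$ by applying $F$ term by term. The key move is Lemma \ref{lem:commutation}, used in the form
\begin{align}
F E^{p-1-\ell} = E^{p-1-\ell} F - [p-1-\ell]\,\frac{q^{-(p-2-\ell)} K - q^{p-2-\ell} K^{-1}}{q - q^{-1}}\, E^{p-2-\ell}. \notag
\end{align}
The straight $E^{p-1-\ell}F$ piece simply raises the $F$-exponent in the $\ell$-th summand and contributes the term $\lambda_{\ell,n-1}^{\alpha}(s)$ on the right-hand side of the recurrence. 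For the commutator piece, I would then slide $K^{\pm 1}$ past $F^{p-t+n-1-\ell}$ using $KF = q^{-2}FK$, picking up a factor $q^{\mp 2(p-t+n-1-\ell)}$, and evaluate $K^{\pm 1}$ on $v^{\alpha}(s,t)$ via the eigenvalue $\alpha q^{s-2t+1}$. After collapsing exponents via $q^{2p}=1$, $q^p=-1$, and the identity $[p-m]=[m]$, the factor $\frac{q^{-(p-2-\ell)}K - q^{p-2-\ell}K^{-1}}{q-q^{-1}}$ reduces to a single scalar of the form $-\alpha[\,\cdot\,]$, which after the re-indexing $\ell \mapsto \ell-1$ produces the second term of the claimed recurrence.

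The boundary relations drop out of the same calculation. The coefficient $\lambda_{0,n}^{\alpha}(s)$ receives no commutator contribution because $a_{n-1}$ has no $\ell = -1$ summand, yielding $\lambda_{0,n}^{\alpha}(s) = \lambda_{0,n-1}^{\alpha}(s) = 1$. Dually, $\lambda_{n,n}^{\alpha}(s)$ receives no straight $E^{p-1-\ell}F$ contribution because there is no $\ell = n$ summand in $a_{n-1}$, so $\lambda_{n,n}^{\alpha}(s) = \alpha[n][s-n]\,\lambda_{n-1,n-1}^{\alpha}(s)$, and iterating this delivers the stated closed product formula.

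The principal technical obstacle is the exponent arithmetic in the commutator contribution: one has to keep track of the several $q$-powers that accumulate from the $K$--$E$ commutation, the $K$--$F$ commutation, and the $K$-eigenvalue of $v^{\alpha}(s,t)$, then simplify the resulting expression of the shape $(q^A - q^B)/(q-q^{-1})$ to a clean quantum integer. Once this single reduction is performed for a generic $\ell$, the rest of the argument is routine substitution and re-indexing.
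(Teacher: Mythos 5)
Your proposal is correct and follows essentially the same route as the paper: induction on $n$, applying $F$ to the level-$(n-1)$ expansion, using Lemma \ref{lem:commutation} to commute $F$ past $E^{p-1-\ell}$, reducing the resulting $K$-dependent factor to a quantum integer via the weight of $v^{\alpha}(s,t)$ and $q^{2p}=1$, and re-indexing to read off the recurrences and the boundary cases. The only caveat is one the paper shares: carrying out the exponent arithmetic you describe yields the generic coefficient $\alpha[\ell][s+\ell-2n]$ (as in the paper's own computation), which agrees with the stated $\alpha[n][s-2n+\ell]$ only at the boundary $\ell=n$, so the $[n]$ in the displayed recurrence should be read as $[\ell]$.
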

\begin{proof}[\rm Proof]
We use induction on $n$.
For $n=0$, the statement is clear and $\lambda_{0, 0}^{\alpha}(s)=1$.

Suppose $n > 0$.
By using Lemma \ref{lem:commutation},
we can see 
\begin{align}
a_n^{\alpha}(s, t) &= F a_{n-1}^{\alpha}(s, t) \notag\\
&=\sum_{\ell =0}^{n-1} \lambda_{\ell, n-1}^{\alpha}(s) FE^{p-1-\ell}F^{p-t+n-1-\ell} v^{\alpha}(s, t) \notag\\
&= \sum_{\ell =0}^{n-1} \lambda_{\ell, n-1}^{\alpha}(s) E^{p-1-\ell}F^{p-t+n-\ell} v^{\alpha}(s, t) \notag\\
&+ \sum_{\ell =0}^{n-1} \alpha [\ell+1][s+\ell-2n+1]\lambda_{\ell, n-1}^{\alpha}(s) E^{p-2-\ell}F^{p-t+n-1-\ell} v^{\alpha}(s, t) \notag\\
&= \lambda_{0, n-1}^{\alpha}(s) E^{p-1}F^{p-t+n}v^{\alpha}(s, t) \notag\\
& \quad + \sum_{\ell =1}^{n-1} \left( \lambda_{\ell, n-1}^{\alpha}(s)+\alpha [\ell][s+\ell-2n]\lambda_{\ell, n-1}^{\alpha}(s)\right) E^{p-1-\ell}F^{p-t+n-\ell} v^{\alpha}(s, t) \notag\\
&\qquad + \alpha [n][s-n] E^{p-1-n}\lambda_{n-1, n-1}(s)F^{p-t}  v^{\alpha}(s, t). \notag
\end{align}
Thus we have the lemma.
\end{proof}

By this lemma we can see that $a_n^{\alpha}(s, t)$ is non-zero for $0 \le n \le s-1$ so the elements
$a_n^{\alpha}(p, t)$ for $0 \le n \le p-1$
are non-zero.
Then Lemma \ref{lem:commutation} shows
\begin{align}
K a_n^{\alpha}(s,t) =& \alpha q^{s-1-2n}a_n^{\alpha}(s,t), \label{eq:aa1}\\
E a_n^{\alpha}(s,t) =& \alpha [n] [s-n]  a_{n-1}^{\alpha}(s,t),\label{eq:aa2}\\
F a_n^{\alpha} (s,t)=&   a_{n+1}^{\alpha}(s,t), \label{eq:aa3}
\end{align}
and $E a_s^{\alpha}(s,t) = 0$.
For $s=p$ it is clear that the space $\mathcal{X}_p^{\alpha}(t)$ spanned by $a_n^{\alpha}(p,t)$, $0 \le n \le p-1$
is isomorphic to the irreducible module $\mathcal{X}_p^{\alpha}$.
It is expected that $a_s^{\alpha}(s, t)$ is zero for $1 \le s \le p-1$ but it is  hard to prove by direct calculation.
We consider the element which is sent to $a_0^{\alpha}(s, t)$ by the action of $F$.

\begin{lem}\label{lem:2}
For $1 \le s \le p-1$ and $1 \le t \le s$, we have
\begin{align}
a_0^{\alpha}(s, t) = F \sum_{n=1}^{p-s} \mu _{n}^{\alpha}(s) E^{p-n}F^{p-t-n}v^{\alpha}(s, t) \notag
\end{align}
where $\mu _{n}^{\alpha}(s) = \prod_{k=p-s-(n-1)}^{p-s-1}(-\alpha[k][p-s-k])$.
\end{lem}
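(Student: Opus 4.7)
The plan is to verify the identity directly by expanding the right-hand side and moving $F$ through each $E^{p-n}$ via the commutation relation of Lemma~\ref{lem:commutation}. The resulting terms should telescope so that only the single monomial $E^{p-1}F^{p-t}v^{\alpha}(s,t) = a_0^{\alpha}(s,t)$ remains.

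First I would compute the coefficient produced when $F$ is moved past $E^{p-n}$. Lemma~\ref{lem:commutation} gives
\[
[F,E^{p-n}] = -[p-n]\,E^{p-n-1}\,\frac{q^{p-n-1}K - q^{-(p-n-1)}K^{-1}}{q-q^{-1}}.
\]
Since $v^{\alpha}(s,t)$ is a $K$-eigenvector with eigenvalue $\alpha q^{s-2t+1}$, after moving $K^{\pm1}$ past $F^{p-t-n}$ and absorbing $q^{\pm 2p}=1$, the bracketed $K$-expression, acting on $F^{p-t-n}v^{\alpha}(s,t)$, reduces to the scalar $\alpha[p+s+n] = -\alpha[s+n]$ (using $q^p=-1$). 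Together with the identity $[s+n]=[p-s-n]$ valid at this root of unity, I get
\[
FE^{p-n}F^{p-t-n}v^{\alpha}(s,t) = E^{p-n}F^{p-t-n+1}v^{\alpha}(s,t) + \alpha[p-n][p-s-n]\,E^{p-n-1}F^{p-t-n}v^{\alpha}(s,t).
\]

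Next I would substitute this into $F\sum_{n=1}^{p-s}\mu_{n}^{\alpha}(s)\,E^{p-n}F^{p-t-n}v^{\alpha}(s,t)$ and re-index the correction piece by $n \mapsto n-1$. For $n \ge 2$ the monomial $E^{p-n}F^{p-t-n+1}v^{\alpha}(s,t)$ then carries the combined coefficient
\[
\mu_n^{\alpha}(s) + \mu_{n-1}^{\alpha}(s)\,\alpha[p-n+1][p-s-n+1].
\]
The product formula for $\mu_n^{\alpha}(s)$ yields $\mu_n^{\alpha}(s)/\mu_{n-1}^{\alpha}(s) = -\alpha[p-s-n+1][n-1]$, and since $[p-n+1]=[n-1]$ by $[p-k]=[k]$, this combined coefficient vanishes identically. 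The $n=1$ term survives and contributes $\mu_1^{\alpha}(s)\,E^{p-1}F^{p-t}v^{\alpha}(s,t) = a_0^{\alpha}(s,t)$, since $\mu_1^{\alpha}(s)=1$ as an empty product. The residual boundary term from the re-indexed sum at $n = p-s+1$ carries the factor $[s][0]=0$ and drops out.

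The main obstacle is the careful bookkeeping needed to reconcile the coefficient $\alpha[p-n][p-s-n]$ produced naturally by the commutator with the ratio $-\alpha[p-s-n+1][n-1]$ dictated by the product definition of $\mu_n^{\alpha}(s)$; this hinges on systematic use of the root-of-unity identities $q^{2p}=1$, $q^p=-1$, and $[p-k]=[k]$, together with a correct index shift in the re-indexing step. Once these are handled, the telescoping is transparent and the claim follows.
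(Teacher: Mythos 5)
Your proof is correct and follows exactly the route the paper intends: the paper's own proof is just the one-line "Direct calculation and Lemma \ref{lem:commutation}," and your computation (commuting $F$ past $E^{p-n}$, evaluating the $K$-expression on the weight vector $F^{p-t-n}v^{\alpha}(s,t)$ with $q^{2p}=1$, $q^p=-1$, $[p-k]=[k]$, and observing the telescoping forced by $\mu_n^{\alpha}(s)/\mu_{n-1}^{\alpha}(s)=-\alpha[p-s-n+1][n-1]$ together with the vanishing boundary factor $[0]$) is precisely that calculation carried out in full.
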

\begin{proof}[\rm Proof]
Direct calculation and Lemma \ref{lem:commutation} prove this lemma.
\end{proof}
Set 
\begin{align}
x_k^{\alpha}(s, t) = \frac{E^{p-s-k-1}}{\prod_{i=k+1}^{p-s-1}(-\alpha[i][p-s-i])} \sum_{n=1}^{p-s} \mu _{n}^{\alpha}(s) E^{p-n}F^{p-t-n}v^{\alpha}(s, t). \label{eq:x1}
\end{align}
Then we can easily see that
\begin{align}
x_0^{\alpha}(s, t) = \frac{\mu _{p-s}^{\alpha}(s) E^{p-1}F^{s-t}v^{\alpha}(s, t)}{\prod_{i=1}^{p-s-1}(-\alpha[i][p-s-i])}
= E^{p-1} F^{s-t} v^{\alpha}(s, t), \notag
\end{align}
so $x_k^{\alpha}(s, t)$ is non-zero and $x_0^{\alpha}(s, t)$ is a highest weight vector of highest weight $-\alpha q^{p-s-1}$.
Then, by Lemma \ref{lem:commutation} and Lemma \ref{lem:2}, we have
\begin{align}
&K x_k^{\alpha}(s, t) = -\alpha q^{p-s-1-2k}x_k^{\alpha}(s, t), \label{eq:xa1} \\
&Ex_{k}^{\alpha}(s, t) = \begin{cases}
                    -\alpha [k][p-s-k]x_{k-1}^{\alpha}(s, t), & 1 \le k \le p-s-1, \\
                    0, & k=0,
                    \end{cases}\label{eq:xa2} \\
&F x_{k}^{\alpha}(s, t) =  \begin{cases}
                   x_{k+1}^{\alpha}(s, t), & 0 \le k \le p-s-2, \\
                   a_0^{\alpha}(s, t), & k=p-s-1.
                   \end{cases}\label{eq:xa3}
\end{align}
By the relations above we obtain
\begin{align}
a_s^{\alpha}(s, t) = F^{s}a_0^{\alpha}(s, t) = F^{s + p-s} x_0^{\alpha}(s, t) = 0 \notag
\end{align}
for $1 \le s \le p-1$ and $1 \le t \le s$.

\begin{prop}\label{prop:1}
For $1 \le s \le p$ and $1 \le t \le s$
the space $\mathcal{X}_s^{\alpha}(t)$
spanned by the vectors of the form $a_n^{\alpha}(s, t)$, $0 \le n \le s-1$
is isomorphic to the irreducible module $\mathcal{X}_s^{\alpha}$.
\end{prop}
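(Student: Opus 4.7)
The plan is to assemble the ingredients already established in the section and observe that the $K$-action forces linear independence, while the already-computed $E$- and $F$-actions exactly match the defining relations of $\mathcal{X}_s^{\alpha}$ given in Section~3.4.

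First I would handle the case $s=p$, which has been noted above: the relations \eqref{eq:aa1}--\eqref{eq:aa3} coincide exactly with the action on the irreducible module $\mathcal{X}_p^{\alpha}$, and all $a_n^{\alpha}(p,t)$ for $0\le n\le p-1$ are non-zero by Lemma~\ref{lem:1}. Hence the assignment $a_n^{\alpha}(p)\mapsto a_n^{\alpha}(p,t)$ extends to a module map, which is surjective, and then is an isomorphism by comparison of dimensions (both are $p$-dimensional once we check linear independence of the $a_n^{\alpha}(p,t)$).

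For general $1\le s\le p-1$, the key observation is that \eqref{eq:aa1} says $a_n^{\alpha}(s,t)$ is a $K$-eigenvector with eigenvalue $\alpha q^{s-1-2n}$. Since $q$ has order $2p$, the exponents $s-1-2n$ for $n=0,\dots,s-1$ are pairwise distinct modulo $2p$ (they differ by even integers of absolute value at most $2(s-1)\le 2p-4$), so the corresponding eigenvalues are distinct. Together with the non-vanishing of each $a_n^{\alpha}(s,t)$ from Lemma~\ref{lem:1}, this gives linear independence of the spanning set, and in particular $\dim\mathcal{X}_s^{\alpha}(t)=s$. Closure of $\mathcal{X}_s^{\alpha}(t)$ under the $\overline{U}_q(sl_2)$-action then follows from the three relations \eqref{eq:aa1}--\eqref{eq:aa3}, together with the boundary conditions $Ea_0^{\alpha}(s,t)=0$ (which holds since $[0]=0$ in \eqref{eq:aa2}) and $Fa_{s-1}^{\alpha}(s,t)=a_s^{\alpha}(s,t)=0$, where the last vanishing was just proved above by routing through the elements $x_k^{\alpha}(s,t)$ and applying $F^{s+p-s}=F^p=0$ to $x_0^{\alpha}(s,t)$.

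With closure and linear independence in hand, the linear map $\varphi\colon \mathcal{X}_s^{\alpha}\to \mathcal{X}_s^{\alpha}(t)$ defined by $\varphi(a_n^{\alpha}(s))=a_n^{\alpha}(s,t)$ is a bijection, and comparing the defining formulas in Section~3.4 with \eqref{eq:aa1}--\eqref{eq:aa3} shows that $\varphi$ intertwines the actions of $E$, $F$ and $K$; thus $\varphi$ is an isomorphism of $\overline{U}_q(sl_2)$-modules. The only step that required real work, the vanishing $a_s^{\alpha}(s,t)=0$ for $1\le s\le p-1$, has already been carried out via the auxiliary vectors $x_k^{\alpha}(s,t)$ before the statement, so the remaining argument is essentially a bookkeeping verification; the main subtlety to watch is ensuring that the $K$-eigenvalues $\alpha q^{s-1-2n}$ are genuinely distinct for $0\le n\le s-1$, which is where the restriction $s\le p$ is used.
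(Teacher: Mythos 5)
Your proposal is correct and follows essentially the same route as the paper, which presents the proposition as a direct consequence of the preceding computations: non-vanishing from Lemma \ref{lem:1}, the action formulas \eqref{eq:aa1}--\eqref{eq:aa3}, and the vanishing $a_s^{\alpha}(s,t)=F^{p}x_0^{\alpha}(s,t)=0$ obtained via the auxiliary vectors $x_k^{\alpha}(s,t)$. Your only addition is making explicit the linear independence via distinct $K$-eigenvalues, which the paper leaves implicit; this is a welcome clarification, not a different method.
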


Set
\begin{align}
&b_n^{\alpha} (s, t) = F^n \sum_{n=1}^{p-s} \mu _{n}^{\alpha}(s) E^{p-n-1}F^{p-t-n}v^{\alpha}(s, t), \label{eq:b1} \\
&y_k^{\alpha}(s, t) = F^{s+k} b_0^{\alpha} (s, t) \label{eq:y1}
\end{align}
for $0 \le n \le s-1$, $0 \le k \le p-s-1$, and $\alpha=\pm$.
By Lemma \ref{lem:genera} we have $b_n^{\alpha} (s, t) \not= 0$ and $y_k^{\alpha}(s, t) \not= 0$ for $0 \le n \le s-1$ and $0 \le k \le p-s-1$.
Then the direct calculation and Lemma \ref{lem:commutation} shows the following relations:

\begin{align}
&Kb_n^{\alpha}(s,t) = \alpha q^{s-1-2n}b_n^{\alpha}(s, t),  \label{eq:ba1}\\
&Eb_n^{\alpha}(s, t) = \begin{cases}
                    \alpha [n][s-n] b_{n-1}^{\alpha}(s,t) + a_{n-1}^{\alpha}(s,t), & 1 \le n \le s-1, \\
                    x_{p-s-1}^{\alpha}(s, t), & n=0,
                    \end{cases}\label{eq:ba2}\\
&Fb_n^{\alpha}(s, t) = \begin{cases}
                 b_{n+1}^{\alpha}(s, t), & 0 \le n \le s-2, \\
                 y_0^{\alpha}(s, t), & n = s-1,
                 \end{cases}\label{eq:ba3}\\
&Ky_{k}^{\alpha}(s, t) = -\alpha q^{p-s-1-2k}y_{k}^{\alpha}(s, t) \label{eq:ya1}, \\
&Ey_{k}^{\alpha}(s,t) = \begin{cases}
                    -\alpha [k][p-s-k]y_{k-1}^{\alpha}(s, t), & 1 \le k \le p-s-1, \\
                    a_{s-1}^{\alpha}(s, t), & k=0,
                    \end{cases}\label{eq:ya2}\\ 
&Fy_{k}^{\alpha}(s,t) = \begin{cases}
                   y_{k+1}^{\alpha}(s,t), & 0 \le k \le p-s-2, \\
                   0, & k=p-s-1.
                   \end{cases}\label{eq:ya3}
\end{align}
Let $\mathcal{P}^{\alpha}_s(t)$, $1 \le s \le p-1$ and $1 \le t \le s$,
be the space spanned by the elements of the form
\begin{align}
b_n^{\alpha}(s,t), x_k^{\alpha}(s, t), y_{k}^{\alpha}(s,t), a_n^{\alpha}(s,t), \notag
\end{align}
for $0 \le n \le s-1$ and $0 \le k \le p-s-1$.
By the relations (\ref{eq:aa1})-(\ref{eq:aa3}), (\ref{eq:xa1})-(\ref{eq:xa3}) and (\ref{eq:ba1})-(\ref{eq:ya3}),
the $2p$-dimensional space $\mathcal{P}^{\alpha}_s(t)$ is an indecomposable left $\overline{U}_q(sl_2)$-module.
Note that the modules $\mathcal{P}^{\alpha}_s(t)$ for $1 \le t \le s$ are isomorphic to each other.

\begin{prop}\label{prop:projective}
The $2p$-dimensional indecomposable left module $\mathcal{P}^{\alpha}_s$,
$1 \le s \le p-1$ and $\alpha = \pm$, is spanned by weight vectors
\begin{align}
b_n^{\alpha}(s), x_k^{\alpha}(s), y_{k}^{\alpha}(s), a_n^{\alpha}(s), \notag
\end{align}
for $0 \le n \le s-1$ and $0 \le k \le p-s-1$ with left actions defined by
\begin{align}
&Kb_n^{\alpha}(s) = \alpha q^{s-1-2n}b_n^{\alpha}(s),  \notag\\
&Eb_n^{\alpha}(s) = \begin{cases}
                    \alpha [n][s-n] b_{n-1}^{\alpha}(s) + a_{n-1}^{\alpha}(s), & 1 \le n \le s-1, \\
                    x_{p-s-1}^{\alpha}(s), & n=0,
                    \end{cases}\notag\\
&Fb_n^{\alpha}(s) = \begin{cases}
                 b_{n+1}^{\alpha}(s), & 0 \le n \le s-2, \\
                 y_0^{\alpha}(s), & n = s-1,
                 \end{cases}\notag
\end{align}
\begin{align}
&K x_k^{\alpha}(s) = -\alpha q^{p-s-1-2k}x_k^{\alpha}(s),  \notag\\
&Ex_{k}^{\alpha}(s) = \begin{cases}
                    -\alpha [k][p-s-k]x_{k-1}^{\alpha}(s), & 1 \le k \le p-s-1, \\
                    0, & k=0,
                    \end{cases} \notag\\
&F x_{k}^{\alpha}(s) =  \begin{cases}
                   x_{k+1}^{\alpha}(s), & 0 \le k \le p-s-2, \\
                   a_0^{\alpha}(s), & k=p-s-1,
                   \end{cases}\notag\\                 
&Ky_{k}^{\alpha}(s) = -\alpha q^{p-s-1-2k}y_{k}^{\alpha}(s), \notag\\
&Ey_{k}^{\alpha}(s) = \begin{cases}
                    -\alpha [k][p-s-k]y_{k-1}^{\alpha}(s), & 1 \le k \le p-s-1, \\
                    a_{s-1}^{\alpha}(s), & k=0,
                    \end{cases}\notag\\ 
&Fy_{k}^{\alpha}(s) = \begin{cases}
                   y_{k+1}^{\alpha}(s), & 0 \le k \le p-s-2, \\
                   0, & k=p-s-1,
                   \end{cases}\notag\\
&K a_n^{\alpha}(s) = \alpha q^{s-1-2n}a_n^{\alpha}(s), \notag\\
&E a_n^{\alpha}(s) =\begin{cases} \alpha [n] [s-n]  a_{n-1}^{\alpha}(s), & 1 \le n \le s-1, \\
                                    0, & n=0,
                      \end{cases} \notag\\
&F a_n^{\alpha} (s)=   \begin{cases} a_{n+1}^{\alpha}(s), & 0 \le n \le s-2, \\ 
                                        0, & n=s-1. \\
                          \end{cases}\notag
\end{align}
\end{prop}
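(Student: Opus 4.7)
The plan is to set $\mathcal{P}^\alpha_s := \mathcal{P}^\alpha_s(1)$ (any admissible $t$ works equally well), viewed as a subspace of $\overline{U}_q(sl_2)$. By construction it is closed under the action of $E$, $F$, $K^{\pm 1}$, and the action formulas stated in the proposition are precisely the relations (\ref{eq:aa1})--(\ref{eq:aa3}), (\ref{eq:xa1})--(\ref{eq:xa3}) and (\ref{eq:ba1})--(\ref{eq:ya3}) with the parameter $t$ suppressed. Three items then remain to be checked: (i) linear independence of the $2p$ listed vectors, giving $\dim \mathcal{P}^\alpha_s = 2p$; (ii) indecomposability; (iii) independence of the isomorphism class from the choice of $t$.

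For (i), the $K$-weight decomposition separates every vector except within the pairs $(a_n,b_n)$ (both of weight $\alpha q^{s-1-2n}$) and $(x_k,y_k)$ (both of weight $-\alpha q^{p-s-1-2k}$). To see that $a_0,b_0$ are independent, I apply $E$ to a hypothetical relation $c a_0+d b_0=0$ to obtain $d\,x_{p-s-1}=0$; since $x_{p-s-1}\neq 0$ (shown earlier in the section), $d=0$ and then $c=0$. Using
\[ E(c a_n+d b_n) = (c\alpha[n][s-n]+d)\,a_{n-1} + d\,\alpha[n][s-n]\,b_{n-1}, \quad n\ge 1, \]
a straightforward induction on $n$ propagates the independence to all pairs $(a_n,b_n)$, using that $[n]$ and $[s-n]$ are nonzero throughout the range. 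The pairs $(x_k,y_k)$ are handled symmetrically, with base case supplied by $E x_0=0$ versus $E y_0=a_{s-1}$.

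For (ii), I will show that the socle of $\mathcal{P}^\alpha_s$ equals the simple submodule $\mathcal{X}_s^\alpha=\mathrm{span}\{a_n\}$ (which is simple by Proposition~\ref{prop:1}). Since every simple submodule is generated by a highest-weight vector, I compute $\ker E$ weight space by weight space using the formulas of the proposition; the only highest-weight vectors turn out to be scalar multiples of $a_0$ and of $x_0$. The cyclic submodule $\overline{U}_q(sl_2)\cdot a_0$ is exactly $\mathcal{X}_s^\alpha$, while the cyclic submodule generated by $x_0$ contains $F^{p-s}x_0 = a_0$ and hence $\mathcal{X}_s^\alpha$ as a proper submodule, so it is not simple. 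Thus $\mathcal{P}^\alpha_s$ admits a unique simple submodule, and any nontrivial direct-sum decomposition would force a semisimple socle of dimension at least two, a contradiction.

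Step (iii) is immediate: the structure constants in (\ref{eq:aa1})--(\ref{eq:ya3}) do not involve $t$, so the obvious basis-matching linear map $\mathcal{P}^\alpha_s(t)\to\mathcal{P}^\alpha_s(t')$ is a $\overline{U}_q(sl_2)$-module isomorphism. The main technical hurdle is step (i): because $\mathcal{X}_s^\alpha$ shares all its $K$-weights with the family $\{b_n\}$, the pairs $(a_n,b_n)$ cannot be separated by the action of $K$ alone, and one must exploit the ``off-diagonal'' terms $+a_{n-1}$ appearing in $E b_n$ and $+a_{s-1}$ in $E y_0$ — exactly the data encoding the non-split extension of $\mathcal{X}_s^\alpha$ inside $\mathcal{P}^\alpha_s$.
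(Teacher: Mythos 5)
Your proposal is correct and follows the same route as the paper: the paper simply realizes $\mathcal{P}^{\alpha}_s$ as the explicitly constructed subspace $\mathcal{P}^{\alpha}_s(t)\subset\overline{U}_q(sl_2)$ and reads the action off the relations (\ref{eq:aa1})--(\ref{eq:ya3}), asserting the dimension, indecomposability, and $t$-independence without further argument. Your contribution is to supply the details the paper leaves implicit --- the linear independence of the pairs $(a_n,b_n)$ and $(x_k,y_k)$ via the off-diagonal terms in the $E$-action, and the uniqueness of the simple submodule $\mathcal{X}_s^{\alpha}$ from the classification of highest-weight vectors --- and these verifications are accurate.
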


Note that $(C-\beta_s)^2$ vanishes on the module $\mathcal{P}^{+}_{s}$ and $\mathcal{P}^{-}_{p-s}$.
Hence there are inclusion maps $\mathcal{P}^{+}_s \to Q_s$ and $\mathcal{P}^{-}_{p-s} \to Q_s$ for $1 \le s \le p-1$.
Since each of $(C-\beta_0)$ and $(C-\beta_p)$ vanishes on $\mathcal{X}_p^-$ and $\mathcal{X}_p^+$, respectively
there are inclusion maps $\mathcal{X}_p^- \to Q_0$ and $\mathcal{X}_p^+ \to Q_p$(see \cite{FGST1}).

\subsection{Primitive idempotents of $\overline{U}_q (sl_2)$}
First we show the irreducible modules $\mathcal{X}_p^{\alpha}(t)$
contain primitive idempotens of $\overline{U}_q (sl_2)$.
\begin{prop}\label{prop:irr-idem}
Set
\begin{align}
e^{\alpha}(p, t) = \frac{1}{2p\prod_{i=1}^{p-1}\left(\alpha[i][p-i]\right)} a_{t-1}^{\alpha}(p,t) \in \mathcal{X}_s^{\alpha}(t).
\end{align}
Then
\begin{align}
e^{\alpha}(p, t_1) e^{\alpha}(p, t_2) = \begin{cases}
                                                 e^{\alpha} (p, t_2), & t_1=t_2, \\
                                                 0, & \text{otherwise}.
                                                 \end{cases}
                                                 \end{align}
In particular each 
$e^{+}(p, t)$ and $e^{-}(p, t)$, $1 \le t \le p$, is primitive idempotent of $Q_p$ and $Q_0$, respectively.
\end{prop}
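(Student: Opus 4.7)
The plan is to expand $e^{\alpha}(p, t_1)$ explicitly as the scalar multiple $\tfrac{1}{C}\, F^{t_1-1} E^{p-1} F^{p-t_1}\, v^{\alpha}(p, t_1)$ (with $C = 2p\prod_{i=1}^{p-1}(\alpha[i][p-i])$) and compute its left multiplication on $e^{\alpha}(p, t_2) \in \mathcal{X}_p^{\alpha}(t_2)$, exploiting the fact that $\mathcal{X}_p^{\alpha}(t_2)$ is an irreducible left $\overline{U}_q(sl_2)$-module by Proposition \ref{prop:1}. Reading the product left-to-right, the factor $v^{\alpha}(p, t_1)$ is applied first.

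The crucial first step is to evaluate $v^{\alpha}(p, t_1) \cdot a_{t_2-1}^{\alpha}(p, t_2)$. By (\ref{eq:aa1}) the vector $a_{t_2-1}^{\alpha}(p, t_2)$ is a $K$-eigenvector of eigenvalue $\alpha q^{p+1-2t_2}$, so substituting and simplifying the $\alpha$-signs yields
\begin{align}
v^{\alpha}(p, t_1) \cdot a_{t_2-1}^{\alpha}(p, t_2) = \Bigl(\sum_{\ell=0}^{2p-1} q^{2\ell(t_1 - t_2)}\Bigr)\, a_{t_2-1}^{\alpha}(p, t_2) = 2p\,\delta_{t_1, t_2}\, a_{t_2-1}^{\alpha}(p, t_2), \notag
\end{align}
the second equality being the standard character orthogonality for the cyclic group generated by $q^2$, which is a primitive $p$-th root of unity. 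In particular, when $t_1 \ne t_2$ the whole product $e^{\alpha}(p, t_1)\, e^{\alpha}(p, t_2)$ vanishes already at this stage.

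When $t_1 = t_2 = t$, I then apply $F^{p-t}$, $E^{p-1}$, $F^{t-1}$ in succession using the irreducible module relations (\ref{eq:aa2})--(\ref{eq:aa3}): the first step sends $a_{t-1}^{\alpha}(p, t) \mapsto a_{p-1}^{\alpha}(p, t)$, the second multiplies by $\prod_{i=1}^{p-1}(\alpha [i][p-i])$ and produces $a_0^{\alpha}(p, t)$, and the third recovers $a_{t-1}^{\alpha}(p, t)$. The normalization $1/C$ is rigged to cancel the accumulated factor $2p \prod_{i=1}^{p-1}(\alpha[i][p-i])$, so that $e^{\alpha}(p, t)\cdot a_{t-1}^{\alpha}(p, t) = a_{t-1}^{\alpha}(p, t)$, which upon dividing by $C$ once more gives $e^{\alpha}(p, t)^2 = e^{\alpha}(p, t)$.

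For primitivity, the left ideal $\overline{U}_q(sl_2)\, e^{\alpha}(p, t)$ is contained in $\mathcal{X}_p^{\alpha}(t)$ and contains the nonzero idempotent, so by irreducibility (Proposition \ref{prop:1}) it coincides with the whole module; thus the ideal is simple and the idempotent is primitive. The block assignment follows from the remark preceding the statement: $(C-\beta_p)$ annihilates $\mathcal{X}_p^+$ and $(C-\beta_0)$ annihilates $\mathcal{X}_p^-$, so $e^{+}(p, t) \in Q_p$ and $e^{-}(p, t) \in Q_0$. The main bookkeeping challenge is simply ensuring that the normalizing constant $C$ exactly matches the product of scalars produced by the three successive operator applications together with the factor $2p$ coming from the character sum; no step requires an idea beyond Proposition \ref{prop:1} and relation (\ref{eq:aa1}).
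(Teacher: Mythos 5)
Your proof is correct and follows essentially the same route as the paper's: expand $a_{t-1}^{\alpha}(p,t)$ as $F^{t-1}E^{p-1}F^{p-t}v^{\alpha}(p,t)$, use the character-sum/weight computation to get $v^{\alpha}(p,t_1)\,a_{t_2-1}^{\alpha}(p,t_2)=2p\,\delta_{t_1,t_2}\,a_{t_2-1}^{\alpha}(p,t_2)$, and then track the scalars produced by $F^{p-t}$, $E^{p-1}$, $F^{t-1}$ acting on the irreducible module. Your primitivity argument via the simplicity of the left ideal $\overline{U}_q(sl_2)\,e^{\alpha}(p,t)=\mathcal{X}_p^{\alpha}(t)$ is also the intended one, just spelled out more explicitly than in the paper.
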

\begin{proof}[\rm Proof]
It is clear that $e^{\alpha}(p, t)$ generates the irreducible module $\mathcal{X}_p^{\alpha}$.
By the left action of $K$ on the irreducible module,
\begin{align}
v^{\alpha}(p, t_1) a_{t_1-1}^{\alpha}(p,t_2) =\begin{cases}
                                                2p a_{t_2-1}^{\alpha}(p,t_2) , & t_1=t_2, \\
                                                 0, & \text{otherwise}.
                                                 \end{cases} \notag
                                                 \end{align}
By the action of $\overline{U}_q(sl_2)$ on the irreducible module
\begin{align}
a_{t-1}^{\alpha}(p,t)^2 &= 2p F^{t-1} E^{p-1}F^{p-t} a_{t-1}^{\alpha}(p,t) \notag\\
&= 2p\prod_{i=1}^{p-1}(\alpha[i][p-i]) F^{t-1}a_{0}^{\alpha}(p,t) \notag\\
&= 2p\prod_{i=1}^{p-1}(\alpha[i][p-i]) a_{t-1}^{\alpha}(p,t). \notag
\end{align} 
\end{proof}

To find other primitive idempotents of $\overline{U}_q(sl_2)$ the following lemma is useful.
\begin{lem}\label{lem:eigenvalue}
Let $\varphi$ be an element in $\overline{U}_q (sl_2)$ with weight $q^{s-1-2n}$ for $0 \le n \le s-1$
and $\psi$ be an element in $\overline{U}_q (sl_2)$ with weight $-q^{p-s-1-2k}$ for $0 \le k \le p-s-1$.

\begin{align}
&v^+(s,t) \varphi = \begin{cases}
                   2p \varphi, & n=t-1,\\
                   0, & \text{otherwise},
                   \end{cases}\notag\\
&v^+(s,t) \psi = 0, \notag\\
&v^-(p-s, u) \varphi = 0, \notag\\
&v^-(p-s, u) \psi = \begin{cases}
                    2p \psi, & k=u-1, \\
                    0, & \text{otherwise},
                    \end{cases}\notag
\end{align}
for $1 \le s \le p-1$, $1 \le t \le s$ and $1 \le u \le p-s$.
\end{lem}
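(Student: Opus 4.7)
The plan is to observe that each $v^{\alpha}(s,t)$ acts (by left multiplication) as a scalar projector onto the $K$-eigenspace of eigenvalue $\alpha q^{s-2t+1}$. All four identities then follow from one uniform geometric-series computation, and the casework reduces to checking when certain exponents lie in the kernel of the map $\mathbb{Z}\to\mathbb{Z}/2p\mathbb{Z}$.

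More precisely, suppose $w\in\overline{U}_q(sl_2)$ satisfies $Kw=c\,w$ for some scalar $c$ with $c^{2p}=1$. Then
\begin{equation*}
v^{\alpha}(s,t)\,w \;=\; \sum_{\ell=0}^{2p-1}\bigl(\alpha q^{-(s-2t+1)}\bigr)^{\ell}K^{\ell}w
\;=\;\Bigl(\sum_{\ell=0}^{2p-1}\bigl(\alpha c\,q^{-(s-2t+1)}\bigr)^{\ell}\Bigr)\,w.
\end{equation*}
Because $\alpha c\,q^{-(s-2t+1)}$ is a $2p$-th root of unity, this sum equals $2p$ when $\alpha c\,q^{-(s-2t+1)}=1$ and vanishes otherwise. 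Hence $v^{\alpha}(s,t)\,w=2p\,w$ exactly when $c=\alpha q^{s-2t+1}$, and $v^{\alpha}(s,t)\,w=0$ in every other case.

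Now I would apply this criterion to the four situations. For $\varphi$ with $K\varphi=q^{s-1-2n}\varphi$: the condition $q^{s-1-2n}=q^{s-2t+1}$ amounts to $t-1-n\equiv 0\pmod{p}$, and since $|t-1-n|\le s-1\le p-1$ under the assumptions $1\le t\le s\le p-1$ and $0\le n\le s-1$, this holds iff $n=t-1$. For $\psi$ with $K\psi=-q^{p-s-1-2k}\psi$, rewriting $-1=q^{p}$ gives $K\psi=q^{-s-1-2k}\psi$, and the analogous comparison with $v^{+}(s,t)$ leads to $t-s-k-1\equiv 0\pmod{p}$; checking the ranges $1\le t\le s$ and $0\le k\le p-s-1$ shows $t-s-k-1$ lies in $[-p+1,-1]$, so the congruence never holds and $v^{+}(s,t)\psi=0$. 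The two identities involving $v^{-}(p-s,u)$ follow by the same bookkeeping: the comparison for $\varphi$ gives $s+u-n-1\equiv 0\pmod p$, which is impossible because $s+u-n-1$ takes values in $[2-s,2p-s-2]\setminus p\mathbb{Z}$ under the ranges in play, whereas the comparison for $\psi$ gives $k=u-1$.

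The only real obstacle is keeping the index bookkeeping straight: one must verify that in each of the four cases the congruence modulo $p$ (or $2p$) forces an honest equality of integers, using the hypothesized ranges $1\le s\le p-1$, $0\le n\le s-1$, $0\le k\le p-s-1$, $1\le t\le s$, $1\le u\le p-s$. Everything else is a one-line geometric-series identity; the projector picture makes the content of the lemma transparent.
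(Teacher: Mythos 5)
Your proof is correct and is exactly the ``direct calculation'' the paper invokes: left multiplication by $v^{\alpha}(s,t)$ on a $K$-eigenvector reduces to a geometric sum over powers of a $2p$-th root of unity, which equals $2p$ or $0$ according to whether that root is $1$, and the stated ranges turn each congruence modulo $p$ into an honest integer equality or an impossibility. One small bookkeeping slip: in the case $v^{-}(p-s,u)\varphi$ the quantity $s+u-n-1$ actually ranges over $[1,p-1]$ (not $[2-s,2p-s-2]$), and it is this bound that makes the congruence $s+u-n-1\equiv 0\pmod p$ impossible; the conclusion you draw is nonetheless the right one.
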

\begin{proof}[\rm Proof]
It follows from direct calculation.
\end{proof}
For $1 \le s \le p-1$ and $1 \le t \le s$ set
\begin{align}
e^{\alpha}(s,t) = \frac{1}{\gamma^{\alpha}(s)}\left(b_{t-1}^{\alpha}(s,t) - \frac{\delta^{\alpha}(s)}{\gamma^{\alpha}(s)} a_{t-1}^{\alpha}(s,t)\right)
\in \mathcal{P}_s^{\alpha}(t)
\end{align}
where
\begin{align}
&\gamma^{\alpha}(s) = 2p \prod_{m=1}^{p-s-1}(-\alpha[m][p-s-m]) \prod_{i=1}^{s-1} (\alpha[i][s-i]), \label{eq:gamma}\\
&\delta^{\alpha}(s) = 2p \prod_{m=1}^{p-s-1}(-\alpha[m][p-s-m])\sum_{j=1}^{s-1}\prod_{\genfrac{}{}{0pt}{}{ k =1}{k \neq j}}^{s-1}(\alpha[k][s-k])  \label{eq:delta}\\
& \qquad \qquad \qquad + 2p \prod_{i=1}^{s-1} (\alpha[i][s-i])\sum_{n=1}^{p-s-1}\prod_{\genfrac{}{}{0pt}{}{k=1}{k \neq n}}^{p-s-1}(-\alpha[k][p-s-k]). \label{eq:00}\notag
\end{align}
Note that $\gamma^{+}(s) = \gamma^{-}(p-s)$ and $\delta^{+}(s) = \delta^{-}(p-s)$.
Then we see that $e^{\alpha}(s,t)$ generates the module $\mathcal{P}_s^{\alpha}$.

Using Lemma \ref{lem:eigenvalue} and the action of $\overline{U}_q (sl_2)$ we have the following:
\begin{prop}\label{prop:idempotent}
The elements $e^{+}(s, t)$ for $1 \le t \le s$ and 
$e^{-}(p-s, u)$ for $1 \le u \le p-s$ are mutually orthogonal primitive idempotents
of $Q_s$ for $1 \le s \le p-1$.
\end{prop}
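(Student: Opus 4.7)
The strategy is to factor every idempotent as $e^\alpha(s,t) = Q \cdot v^\alpha(s,t)$ for some $Q \in \overline{U}_q(sl_2)$, which is possible because both $a_0^\alpha(s,t)$ and $b_0^\alpha(s,t)$ carry $v^\alpha(s,t)$ as a right factor. Then $e^\alpha(s,t) \cdot X = Q \cdot (v^\alpha(s,t) \cdot X)$, so Lemma~\ref{lem:eigenvalue} evaluates the inner factor purely from the $K$-weight of $X$.

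For orthogonality, observe that $e^+(s,t)$ has weight $q^{s-2t+1}$ and $e^-(p-s,u)$ has weight $-q^{p-s-2u+1}$. Lemma~\ref{lem:eigenvalue} gives
\begin{align*}
v^+(s,t_1)\cdot e^+(s,t_2) &= 2p\,\delta_{t_1,t_2}\, e^+(s,t_2),\\
v^-(p-s,u_1)\cdot e^-(p-s,u_2) &= 2p\,\delta_{u_1,u_2}\,e^-(p-s,u_2),
\end{align*}
and both mixed-sign factors $v^+(s,t)\cdot e^-(p-s,u)$ and $v^-(p-s,u)\cdot e^+(s,t)$ vanish. Left-multiplication by $Q$ then kills every cross-product, giving mutual orthogonality.

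For idempotency, the same factorization yields $e^\alpha(s,t)^2 = 2p\, Q \cdot e^\alpha(s,t)$, which is now computable through the module action in Proposition~\ref{prop:projective}. This reduces to the four products $a_{t-1}^2$, $a_{t-1}b_{t-1}$, $b_{t-1}a_{t-1}$, $b_{t-1}^2$, each obtained by applying the explicit $E,F$-polynomial defining $a_{t-1}^\alpha(s,t)$ or $b_{t-1}^\alpha(s,t)$ to the appropriate vector in $\mathcal{P}_s^\alpha(t)$. Three of them are routine: $a_{t-1}^2 = 0$ (since $F^{p-t}$ annihilates $a_{t-1}$, the $a$-chain terminating at $a_{s-1}$) and $a_{t-1}b_{t-1} = b_{t-1}a_{t-1} = \gamma^\alpha(s)\, a_{t-1}$ (from the $b\!\to\!y\!\to\!a$ traversal down and back up). Substituting the remaining identity $b_{t-1}^2 = \gamma^\alpha(s)\, b_{t-1} + \delta^\alpha(s)\, a_{t-1}$ into the expansion of $e^\alpha(s,t)^2$ gives $(\gamma^\alpha)^{-2}\bigl(\gamma^\alpha b_{t-1} + \delta^\alpha a_{t-1} - 2\delta^\alpha a_{t-1}\bigr) = e^\alpha(s,t)$, so the constants $\gamma^\alpha(s)$ and $\delta^\alpha(s)$ are exactly tuned to force idempotency.

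Primitivity follows because $e^\alpha(s,t)$ generates $\mathcal{P}_s^\alpha(t)$ as a left module (as noted after its definition), so $\overline{U}_q(sl_2)\cdot e^\alpha(s,t) = \mathcal{P}_s^\alpha(t)$ is indecomposable by Proposition~\ref{prop:projective}. Membership in $Q_s$ is immediate from the inclusions $\mathcal{P}_s^+, \mathcal{P}_{p-s}^- \hookrightarrow Q_s$ recorded after Proposition~\ref{prop:projective}. The main obstacle is verifying $b_{t-1}^2 = \gamma^\alpha(s)\, b_{t-1} + \delta^\alpha(s)\, a_{t-1}$: the coefficient $\delta^\alpha(s)$ must emerge from two combinatorially distinct sources---the ``leakage'' $Eb_n = \alpha[n][s-n]b_{n-1} + a_{n-1}$ inside the expansion of $E^{s-1}b_{s-1}$, and the sum over $1 \le n \le p-s-1$ that routes $F^{p-t-n}b_{t-1}$ through the $y$-chain---and one must confirm that the combined expression reproduces exactly the two explicit summands appearing in the definition of $\delta^\alpha(s)$.
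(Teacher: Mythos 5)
Your proposal is correct and follows essentially the same route as the paper: reduce every product to the weight projection of Lemma~\ref{lem:eigenvalue} followed by the explicit module action on $\mathcal{P}_s^{\alpha}(t)$ from Proposition~\ref{prop:projective}, with orthogonality coming from the weight mismatch and idempotency from the identity $b_{t-1}^2=\gamma^{\alpha}(s)\,b_{t-1}+\delta^{\alpha}(s)\,a_{t-1}$ (which is exactly the content of the paper's displayed computation, your two ``sources'' of $\delta^{\alpha}(s)$ matching its two summands). Your reorganization into the four pairwise products $a^2, ab, ba, b^2$ is only a cosmetic repackaging of the paper's direct expansion of $(e^{+}(s,t))^2$.
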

\begin{proof}[\rm Proof]
Using Lemma \ref{lem:eigenvalue}, we have
\begin{align}
&\left(e^{+}(s, t)\right)^2 \notag\\
&=  \frac{F^{t-1}}{\left(\gamma^{+}(s)\right)^2} 
\left(b_{0}^{+}(s,t) - \frac{\delta^{+}(s)}{\gamma^{+}(s)} a_{0}^{+}(s,t)\right) 
 \left(b_{t-1}^+(s,t) - \frac{\delta^{+}(s)}{\gamma^{+}(s)} a_{t-1}^{+}(s,t)\right)\notag\\
&=2p\frac{F^{t-1}}{(\gamma^{+}(s))^2}\left(\sum_{i=1}^{p-s} \mu _{i}^{+}(s) E^{p-i-1}F^{p-t-i}-
\frac{\delta^+(s)}{\gamma^+(s)}E^{p-1}F^{p-t}\right)
\left(b_{t-1}^+(s,t) - \frac{\delta^{+}(s)}{\gamma^{+}(s)} a_{t-1}^{+}(s,t)\right). \notag
\end{align}
By the action of $\overline{U}_q(sl_2)$ on $\mathcal{P}_s^+$,
\begin{align}
&\left(\sum_{i=1}^{p-s} \mu _{i}^{+}(s) E^{p-i-1}F^{p-t-i}\right) \left(b_{t-1}^+(s,t) - \frac{\delta^{+}(s)}{\gamma^{+}(s)} a_{t-1}^{+}(s,t)\right) \notag\\
&= \mu_{p-s}^+(s) \prod_{i=1}^{s-1} ([i][s-i]) \left(b_{0}^+(s,t) - \frac{\delta^{+}(s)}{\gamma^{+}(s)} a_{0}^{+}(s,t)\right)
+ \mu_{p-s}^+(s) \sum_{i=1}^{s-1}\prod_{\genfrac{}{}{0pt}{}{ j =1}{j \neq i}}^{s-1}([j][s-j])a_{0}^{+}(s,t) \notag\\
&\qquad+ \prod_{i=1}^{s-1} ([i][s-i])\sum_{i=1}^{p-s-1} \mu_{i}^+(s) \prod_{j=1}^{p-s-1-i}
(-[j][p-s-j])  a_0^+(s, t) \notag\\
&= \frac{\gamma^+(s)}{2p} \left(b_{0}^+(s,t) - \frac{\delta^{+}(s)}{\gamma^{+}(s)} a_{0}^{+}(s,t)\right) + \frac{\delta^+(s)}{2p} a_0^+(s, t), \notag
\end{align}
and
\begin{align}
&E^{p-1}F^{p-t} \left(b_{t-1}^+(s,t) - \frac{\delta^{+}(s)}{\gamma^{+}(s)} a_{t-1}^{+}(s,t)\right)
= \prod_{i=1}^{p-s-1}
(-[i][p-s-i]) \prod_{j=1}^{s-1} ([j][s-j]) a_{0}^{+}(s,t)\notag\\
&= \frac{\gamma^+(s)}{2p} a_{0}^{+}(s,t), \notag
\end{align}
since $\mu_i^+(s) = \prod_{j=p-s-(i-1)}^{p-s-1}(-[j][p-s-j])$.
Hence we have
\begin{align}
\left(e^{+}(s, t)\right)^2  &= 2p\frac{F^{t-1}}{(\gamma^{+}(s))^2} \left( \frac{\gamma^+(s)}{2p} \left(b_{0}^+(s,t) - \frac{\delta^{+}(s)}{\gamma^{+}(s)} a_{0}^{+}(s,t)\right) +  \frac{\delta^+(s)}{2p} a_0^+(s, t) - \frac{\delta^{+}(s)}{\gamma^{+}(s)}\frac{\gamma^+(s)}{2p} a_{0}^{+}(s,t)\right) \notag\\
&= e^{+}(s, t). \notag
\end{align}

By Lemma \ref{lem:eigenvalue}
these idempotents are mutually orthogonal.
\end{proof}

\begin{cor}
The modules $\mathcal{X}_p^{\alpha}$ and $\mathcal{P}_s^{\alpha}$ for $\alpha=\pm$ and $1 \le s \le p-1$
are indecomposable projective modules.
\end{cor}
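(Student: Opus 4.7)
The plan is to invoke the standard fact that, in any finite-dimensional algebra $A$, the left ideal $Ae$ generated by an idempotent $e$ is a projective left $A$-module (as a direct summand of the regular module) and is indecomposable precisely when $e$ is primitive. Hence it suffices to identify each of the claimed modules with $\overline{U}_q(sl_2)\cdot e$ for one of the primitive idempotents produced in Propositions~\ref{prop:irr-idem} and~\ref{prop:idempotent}.

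For $\mathcal{X}_p^{\alpha}$, I would fix any $1 \le t \le p$ and consider $e^{\alpha}(p,t)$. Since $e^{\alpha}(p,t) \in \mathcal{X}_p^{\alpha}(t)$, the cyclic left ideal $\overline{U}_q(sl_2)\,e^{\alpha}(p,t)$ is contained in $\mathcal{X}_p^{\alpha}(t)$, which is irreducible and isomorphic to $\mathcal{X}_p^{\alpha}$ by Proposition~\ref{prop:1}; as $e^{\alpha}(p,t)$ is nonzero, this inclusion must be an equality. Combined with the primitivity established in Proposition~\ref{prop:irr-idem}, this yields that $\mathcal{X}_p^{\alpha}$ is indecomposable projective.

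For $\mathcal{P}_s^{\alpha}$ with $1 \le s \le p-1$, I would fix $1 \le t \le s$ (or $1 \le u \le p-s$ for the sign $\alpha=-$) and consider $e^{\alpha}(s,t)$, which by construction lies in $\mathcal{P}_s^{\alpha}(t) \cong \mathcal{P}_s^{\alpha}$. The task is to show $\overline{U}_q(sl_2)\,e^{\alpha}(s,t) = \mathcal{P}_s^{\alpha}(t)$, which I would do by walking through the relations of Proposition~\ref{prop:projective}. The element $e^{\alpha}(s,t)$ is a linear combination of $b_{t-1}^{\alpha}(s,t)$ and $a_{t-1}^{\alpha}(s,t)$; applying $F^{s-t+1}$ kills the $a$-part (because $Fa_{s-1}^{\alpha}=0$) and produces a nonzero scalar multiple of $y_0^{\alpha}(s,t)$, and iterating $F$ then recovers every $y_k^{\alpha}(s,t)$. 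Applying $E$ to $y_0^{\alpha}(s,t)$ gives $a_{s-1}^{\alpha}(s,t)$, from which the whole $a$-chain becomes accessible. Subtracting the $a$-component back from $e^{\alpha}(s,t)$ we recover $b_{t-1}^{\alpha}(s,t)$; the $E,F$-actions on the $b$-chain (which mix in the already-obtained $a_n^{\alpha}$'s) then produce every $b_n^{\alpha}(s,t)$, and finally $Eb_0^{\alpha}(s,t)=x_{p-s-1}^{\alpha}(s,t)$ together with repeated action of $E$ yields all $x_k^{\alpha}(s,t)$. Primitivity of $e^{\alpha}(s,t)$ from Proposition~\ref{prop:idempotent} then completes the argument.

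The main obstacle is purely the bookkeeping needed to check that $e^{\alpha}(s,t)$ generates the full $2p$-dimensional module $\mathcal{P}_s^{\alpha}(t)$; nothing deeper is involved, since the denominators $\gamma^{\alpha}(s)$ in the definition of $e^{\alpha}(s,t)$ are visibly nonzero at our root of unity (each $[i]$ with $1 \le i \le p-1$ is nonzero), so the coefficient of $b_{t-1}^{\alpha}(s,t)$ in $e^{\alpha}(s,t)$ does not vanish and the chain-traversal argument above goes through without obstruction.
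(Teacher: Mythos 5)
Your proposal is correct and follows the same route the paper intends (the paper leaves the corollary's proof implicit, having already asserted that $e^{\alpha}(s,t)$ generates $\mathcal{P}_s^{\alpha}$ and established primitivity in Propositions~\ref{prop:irr-idem} and~\ref{prop:idempotent}). Your explicit chain-traversal verifying that $e^{\alpha}(s,t)$ generates all $2p$ basis vectors of $\mathcal{P}_s^{\alpha}(t)$ is a valid filling-in of the detail the paper omits.
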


It is clear that
\begin{align}
\overline{U}_q (sl_2) \supseteq  \bigoplus_{s=1}^{p-1}
\bigoplus_{t=1}^s (\mathcal{P}_s^{+}(t) \oplus  \mathcal{P}_s^{-}(t)) \oplus 
\bigoplus_{t=1}^p (\mathcal{X}_p^+(t) \oplus \mathcal{X}_p^-(t)).
\end{align}
Then the dimension of right hand side is $2p^3$ so the above inclusion is an equality.

\section{Symmetric linear functions of $\overline{U}_q (sl_2)$}

\subsection{Basis and multiplication of $Q_s$}
For $s=0, p$ we can choose a basis of $Q_s$ as follows:
\begin{align}
A_n^{\alpha}(p, t) = \frac{1}{2p\prod_{i=1}^{p-1}(\alpha[i][p-i])} a_{n}^{\alpha}(p,t),
 \ \text{for} \ 0 \le n \le p-1 \ \text{and} \ 1 \le t \le p. 
\end{align}
By similar argument in the proof of Proposition \ref{prop:irr-idem} we have
\begin{align}
A_{m}^{\alpha}(p, t_1) A_n^{\alpha}(p, t_2) = \begin{cases}
                                                 A_m^{\alpha} (p, t_2), & n=t_1-1 \\
                                                 0, & \text{otherwise}.
                                                 \end{cases} \label{eq:irr-mult}
                                       \end{align}
Then the algebras $Q_s$ for $s=0, p$ are isomorphic to $M_p(\mathbb{C})$ as algebras.

For $1 \le s \le p-1$ the following elements define a basis of $Q_s$.
\begin{align}
&B_n^+(s, t) := \frac{1}{\gamma_s}\left(b_n^+(s,t) - \frac{\delta_s}{\gamma_s}a_n^+(s,t)\right), \label{def:1}\\
&X_k^+(s,t) := \frac{1}{\gamma_s} x_k^+(s,t) = \frac{E^{p-s-k}}{\prod_{i=k+1}^{p-s-1}(-[i][p-s-i])}B_0^+(s, t), \label{def:2}\\
&Y_k^+(s,t) := \frac{1}{\gamma_s} y_k^+(s,t)=F^{s+k} B_0^+(s, t), \label{def:3}\\
&A_n^+(s,t) := \frac{1}{\gamma_s} a_n^+(s,t) = F^{n+1}E B_0^+(s, t), \label{def:4}\\
&B_k^-(p-s, u) := \frac{1}{\gamma_s} \left(b_k^-(p-s,u) - \frac{\delta_s}{\gamma_s}a_k^-(p-s,u)\right), \label{def:5}\\
&X_n^-(p-s, u) := \frac{1}{\gamma_s} x_n^-(p-s,u) = \frac{E^{s-k}}{\prod_{i=n+1}^{s-1}([i][s-i])} B_0^-(p-s, u), \label{def:6}\\
&Y_n^-(p-s, u) := \frac{1}{\gamma_s} y_n^-(p-s,u) = F^{p-s+n} B_0^-(p-s, u), \label{def:7}\\
&A_k^-(p-s, u) := \frac{1}{\gamma_s} a_k^-(p-s,u) = F^{k+1}E B_0^-(p-s, u)\label{def:8}, 
\end{align}
for $1 \le t \le s$, $1 \le u \le p-s$, $0 \le n \le s-1$ and $0 \le k \le p-s-1$
where
\begin{align}
&\gamma_s = \gamma^{+}(s) = \gamma^{-}(p-s), \notag\\
&\delta_s = \delta^+(s) = \delta^-(p-s) .\notag
\end{align}
Similar argument in the proof of Proposition \ref{prop:idempotent} the following holds.
\begin{align}
&B_m^+(s, t_1) B_n^+(s, t_2) = \begin{cases}
                              B_m^+(s, t_2), & n=t_1-1,\\
                              0, & \text{otherwise},
                              \end{cases}\label{eq:1}\\
&B_m^+(s, t_1) X_k^+(s, t_2) = 0, \label{eq:2}\\
&B_m^+(s, t_1) Y_k^+(s, t_2) = 0, \label{eq:3}\\ 
&B_m^+(s, t_1) A_n^+(s, t_2) = \begin{cases}
                              A_m^+(s, t_2), & n=t_1-1,\\
                              0, & \text{otherwise},
                              \end{cases}\label{eq:4}
                              \end{align}
\begin{align}
&B_m^+(s, t_1) B_k^-(p-s, u) = 0, \label{eq:5}\\
&B_m^+(s, t_1) X_n^-(p-s, u) = \begin{cases}
                              X_m^-(p-s, u), & n=t_1-1,\\
                              0, & \text{otherwise},
                              \end{cases}\label{eq:6}\\
&B_m^+(s, t_1) Y_n^-(p-s, u) = \begin{cases}
                              Y_m^-(p-s, u), & n=t_1-1,\\
                              0, & \text{otherwise},
                              \end{cases}\label{eq:7}\\
&B_m^+(s, t_1) A_k^-(p-s, u) = 0,\label{eq:8}\\
&B_k^-(p-s, u_1) B_n^+(s, t) = 0, \label{eq:9}\\
&B_k^-(p-s, u_1) X_{\ell}^+(s, t) = \begin{cases}
                                    X_k^+(s,t), & \ell = u_1-1, \\
                                    0, & \text{otherwise},
                                    \end{cases}\label{eq:10}\\
&B_k^-(p-s, u_1) Y_{\ell}^+(s, t) = \begin{cases}
                                    Y_k^+(s, t), & \ell=u_1-1, \\
                                     0, & \text{otherwise},
                                    \end{cases} \label{eq:11}\\ 
&B_k^-(p-s, u_1) A_n^+(s, t) = 0, \label{eq:12}\\
&B_k^-(p-s, u_1) B_{\ell}^-(p-s, u_2) = \begin{cases}
                                         B_k^-(p-s, u_2), & \ell= u_1-1, \\
                                         0, & \text{otherwise},
                                         \end{cases} \label{eq:13}\\
&B_k^-(p-s, u_1) X_{n}^-(p-s, u_2) = 0, \label{eq:14}\\
&B_k^-(p-s, u_1) Y_n^-(p-s, u_2) = 0 \label{eq:15} \\
&B_k^-(p-s, u_1) A_{\ell}^-(p-s, u_2) = \begin{cases}
                                         A_k^-(p-s, u_2), & \ell=u_1-1,\\
                                         0, & \text{otherwise}.
                                         \end{cases}\label{eq:16}
\end{align}

By \eqref{def:1} - \eqref{def:8} and \eqref{eq:1}-\eqref{eq:16},
we can determine the multiplication table among the basis:
\small{
\begin{center}
\begin{tabular}{c|cccc}
\multicolumn{5}{c}{} \\
$x \backslash y$    & $B_{t_1-1}^+(s,t_2)$   & $X_{u-1}^+(s,t_2)$ & $Y_{u-1}^+(s,t_2)$ & $A_{t_1-1}^+(s,t_2)$ \\ \hline
$B_n^+(s,t_1)$        & $B_n^+(s, t_2)$   & $0$     & $0$     & $A_n^+(s, t_2)$      \\ 
$X_k^+(s,t_1)$        & $X_k^+(s,t_2)$   & $0$     & $0$     & $0$          \\ 
$Y_k^+(s,t_1)$        & $Y_k^+(s,t_2)$   & $0$     & $0$     & $0$          \\ 
$A_n^+(s,t_1)$        & $A_n^+(s,t_2)$   & $0$     & $0$     & $0$          \\ 
$B_k^-(p-s,u)$      & $0$       & $X_k^+(s,t_2)$ & $Y_k^+(s,t_2)$ & $0$          \\ 
$X_n^-(p-s,u)$      & $0$       & $0$     & $A_n^+(s,t_2)$ & $0$          \\ 
$Y_n^-(p-s,u)$      & $0$       & $A_n^+(s,t_2)$ & $0$     & $0$          \\ 
$A_k^-(p-s,u)$        & $0$       & $0$     & $0$     & $0$          \\ 
\end{tabular}

\begin{tabular}{c|cccc}
\multicolumn{5}{c}{} \\
$x \backslash y$    & $B_{u_2-1}^-(p-s,u_2)$   & $X_{t-1}^-(p-s,u_2)$ & $Y_{t-1}^-(p-s,u_2)$ & $A_{u_2-1}^-(p-s,u_2)$ \\ \hline
$B_n^+(s,t)$        & $0$   & $X_n^-(p-s,u_2)$     & $Y_n^-(p-s,u_2)$     & $0$      \\ 
$X_k^+(s,t)$        & $0$   & $0$                  & $A_k^-(p-s,u_2)$     & $0$          \\ 
$Y_k^+(s,t)$        & $0$   & $A_k^-(p-s,u_2)$     & $0$     & $0$          \\ 
$A_n^+(s,t)$        & $0$   & $0$                  & $0$     & $0$          \\ 
$B_k^-(p-s,u_1)$    & $B_k^-(p-s,u_2)$       & $0$ & $0$ & $A_k^-(p-s,u_2)$          \\ 
$X_n^-(p-s,u_1)$    & $X_n^-(p-s,u_2)$       & $0$ & $0$ & $0$          \\ 
$Y_n^-(p-s,u_1)$    & $Y_n^-(p-s,u_2)$       & $0$ & $0$     & $0$          \\ 
$A_k^-(p-s,u_1)$    & $A_k^-(p-s,u_2)$       & $0$ & $0$     & $0$          \\ 
\end{tabular}
\end{center}
}
for $1 \le t, t_1, t_2, n+1 \le s$ and $1 \le u, u_1, u_2, k+1 \le p-s$
and other multiplications among the basis are all zero.
\subsection{Symmetric linear functions}
$\overline{U}_q(sl_2)$ is unimodular and the square of the antipode is inner.
So the dimension of the space of symmetric linear functions of $\overline{U}_q(sl_2)$ is equal to that of the center of $\overline{U}_q(sl_2)$.
The structure of the center of $\overline{U}_q(sl_2)$ is given in \cite{FGST1}.
\begin{prop}[\cite{FGST1}]
The center of $\overline{U}_q(sl_2)$ is $(3p-1)$-dimensional.
\end{prop}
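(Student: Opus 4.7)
The plan is to exploit the block decomposition~(\ref{directsum:1}). Since the Casimir element $C$ is central, each generalized eigenspace $Q_s$ is a two-sided ideal, so
\begin{align*}
Z(\overline{U}_q(sl_2)) = \bigoplus_{s=0}^{p} Z(Q_s),
\end{align*}
and it is enough to compute $\dim Z(Q_s)$ block by block.

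For the ``semisimple'' blocks $s=0$ and $s=p$, the multiplication rule~\eqref{eq:irr-mult} identifies $Q_p \cong M_p(\mathbb{C})$ via $A_n^{+}(p,t) \mapsto E_{n+1,t}$, and similarly $Q_0 \cong M_p(\mathbb{C})$. Each contributes $1$ to $\dim Z$, namely the block identity $\sum_{t} A_{t-1}^{\pm}(p,t)$, for a combined contribution of $2$.

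For $1 \le s \le p-1$ I would show $\dim Z(Q_s)=3$ by first exhibiting three explicit central elements,
\begin{align*}
1_{Q_s} &= \sum_{t=1}^{s} B_{t-1}^{+}(s,t) + \sum_{u=1}^{p-s} B_{u-1}^{-}(p-s,u), \\
\eta_s^{+} &= \sum_{t=1}^{s} A_{t-1}^{+}(s,t), \qquad \eta_s^{-} = \sum_{u=1}^{p-s} A_{u-1}^{-}(p-s,u),
\end{align*}
and checking centrality directly from the two multiplication tables at the end of Section~5.1 (linear independence is clear from disjoint supports in the basis). To show they span $Z(Q_s)$, I would expand a hypothetical central element $z$ in the $2p^2$-dimensional basis and enforce $xz=zx$ for each basis vector $x$. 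Commutation with the primitive idempotents $e^{+}(s,t_1)=B_{t_1-1}^{+}(s,t_1)$ and $e^{-}(p-s,u_1)=B_{u_1-1}^{-}(p-s,u_1)$ confines the support of $z$ to the four diagonal families $B_{t-1}^{+}(s,t)$, $A_{t-1}^{+}(s,t)$, $B_{u-1}^{-}(p-s,u)$, $A_{u-1}^{-}(p-s,u)$; commutation with the link elements $X_{k}^{\pm}$ and $Y_{k}^{\pm}$ then forces the surviving $B^{+}$-diagonal coefficients to be a single constant, the $B^{-}$-diagonal coefficients to be a single constant, and leaves the two $A^{\pm}$-diagonal families as independent free parameters. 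Summing over all blocks yields $\dim Z(\overline{U}_q(sl_2)) = 2 + 3(p-1) = 3p-1$.

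The main obstacle is this last spanning step: although each individual commutation relation is a one-line consequence of the multiplication table, systematically eliminating all off-diagonal contributions requires running through each of the eight families of basis vectors of $Q_s$ against the two families of primitive idempotents and the four families of link generators. The bookkeeping is mechanical but lengthy. A more conceptual shortcut would be to read off the basic algebra of $Q_s$ from the projective-cover data of Section~4, present it by quiver and relations, and invoke the standard description of the center of such bound-quiver symmetric algebras.
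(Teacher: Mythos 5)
Your proposal is correct, but note that the paper itself gives no proof of this proposition: it is quoted from \cite{FGST1}, where the center is computed by decomposing the regular representation of $\overline{U}_q(sl_2)$ as a bimodule over itself and by exhibiting canonical central idempotents and nilpotents via the Drinfeld and Radford maps. What you propose is instead a self-contained verification from the data developed in Sections 4 and 5 of this paper: the block decomposition \eqref{directsum:1} (each $Q_s$ is a two-sided ideal since $C$ is central, so $Z(\overline{U}_q(sl_2))=\bigoplus_s Z(Q_s)$), the identification $Q_0\cong Q_p\cong M_p(\mathbb{C})$ from \eqref{eq:irr-mult}, and the multiplication tables for the basis $B,X,Y,A$ of $Q_s$, $1\le s\le p-1$. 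This route works, and it has the virtue of using only what the paper proves. Your confinement step is right: writing $z=\sum_i e_ize_i$ for the orthogonal idempotents $e^+(s,t)=B^+_{t-1}(s,t)$ and $e^-(p-s,u)=B^-_{u-1}(p-s,u)$ puts a central $z$ into the span of the diagonal elements $B^+_{t-1}(s,t)$, $A^+_{t-1}(s,t)$, $B^-_{u-1}(p-s,u)$, $A^-_{u-1}(p-s,u)$, and commuting with the $B$'s forces each $A^{\pm}$-diagonal family to carry one common coefficient. One point should be stated more carefully, though: the link elements do not merely make the $B^+$-diagonal coefficients one constant and the $B^-$-diagonal coefficients another constant --- they force these two constants to coincide. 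Indeed, by \eqref{eq:2} and \eqref{eq:10} one has
\begin{align}
zX^+_{\ell}(s,t)=\beta^-_{\ell+1}X^+_{\ell}(s,t),\qquad X^+_{\ell}(s,t)z=\beta^+_{t}X^+_{\ell}(s,t), \notag
\end{align}
so centrality gives $\beta^+_{t}=\beta^-_{u}$ for all $t,u$. If the two constants were independent you would get $\dim Z(Q_s)=4$ and a total of $4p-2$, contradicting your own spanning set $\{1_{Q_s},\eta_s^{+},\eta_s^{-}\}$. With that correction the count $\dim Z(Q_s)=3$ stands, and $2+3(p-1)=3p-1$ as claimed.
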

By Proposition \ref{prop:center-yaho}, we have:
\begin{cor}\label{cor:sym}
The space of symmetric functions of $\overline{U}_q(sl_2)$ is $(3p-1)$-dimensional.
\end{cor}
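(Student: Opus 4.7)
The plan is essentially to chain together structural facts that have already been assembled earlier in the paper. The preceding proposition (cited from \cite{FGST1}) asserts that the center $Z(\overline{U}_q(sl_2))$ has dimension $3p-1$, so the corollary will follow the moment I can exhibit a linear isomorphism $\SLF(\overline{U}_q(sl_2)) \cong Z(\overline{U}_q(sl_2))$.

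For that isomorphism I would invoke Proposition \ref{prop:center-yaho}, which provides exactly such an isomorphism for any finite-dimensional unimodular Hopf algebra whose antipode squared is inner. The two hypotheses of that proposition are already verified for $\overline{U}_q(sl_2)$ in Section 3: unimodularity was noted immediately after displaying the common left/right integral $E^{p-1}F^{p-1}\sum_{\ell=0}^{2p-1}K^{\ell}$, and Proposition \ref{prop:square} exhibits $g = K^{p+1}$ as an invertible element with $S^2(x) = gxg^{-1}$ for all $x$. Thus both hypotheses are satisfied.

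Combining the two inputs, $\dim \SLF(\overline{U}_q(sl_2)) = \dim Z(\overline{U}_q(sl_2)) = 3p-1$, which is the claim. There is no real obstacle to overcome here, since the corollary is a one-line consequence of results already collected in the paper; the only care needed is to cite Proposition \ref{prop:center-yaho} together with the unimodularity and innerness of $S^2$ as the bridge between the dimension count of the center and that of $\SLF(\overline{U}_q(sl_2))$.
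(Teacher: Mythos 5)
Your proposal is correct and follows exactly the paper's own route: the paper likewise combines Proposition \ref{prop:center-yaho} (using the unimodularity of $\overline{U}_q(sl_2)$ and the innerness of $S^2$ via $g=K^{p+1}$ from Proposition \ref{prop:square}) with the cited fact that the center is $(3p-1)$-dimensional. Nothing is missing.
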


Since both $Q_0$ and $Q_p$ are isomorphic to the matrix algebra $M_p(\mathbb{C})$
each of the trace of $Q_0$ and $Q_p$ respectively give symmetric linear functions.
The trace $T_0$ and $T_p$ is given by
\begin{align}
T_0(x) = \sum_{t=1}^p \psi_{t-1, t}^-(x), \notag\\
T_p(x)=\sum_{t=1}^{p} \psi_{t-1,t}^+(x),\notag
\end{align}
for $x= \sum_{t=1}^p \sum_{n=0}^{p-1} \psi^{\pm}_{n, t}(x) A_{n}^{\pm}(p, t) \in Q_0 \ \text{or} \ Q_p$.

Next we determine the symmetric linear functions of $Q_s$ for $1 \le s \le p-1$.
Let us write $x \in Q_s$ by the basis of $Q_s$:
\begin{align}
&x = \sum_{t=1}^{s} \Bigl\{ \sum_{n=0}^{s-1} \left( \varphi_{n,t}^+(x) B_n^+(s,t) + \psi_{n,t}^+(x) A_n^+(s,t) \right) \notag\\
     &\qquad \qquad + \sum_{k=0}^{p-s-1} \left( \xi_{k,t}^+(x) X_k^+(s,t) + \zeta_{k,t}^+(x) Y_k^+(s,t) \right) \Bigr\} \notag\\
    &+ \sum_{u=1}^{p-s} \Bigl\{ \sum_{k=0}^{p-s-1} \left( \varphi_{k,u}^-(x) B_k^-(p-s,u) + \psi_{k,u}^-(x) A_k^-(p-s,u) \right) \notag\\
     &\qquad \qquad + \sum_{n=0}^{s-1} \left( \xi_{n,u}^-(x) X_n^+(p-s,u) + \zeta_{n,u}^-(x) Y_n^-(p-s,u) \right)  \Bigr\}, \label{eq:ex} 
\end{align}
where all coefficients are complex numbers.
Now we define the linear functions 
\begin{align}
&T_s^+(x) = \sum_{t=1}^{s} \varphi_{t-1,t}^+(x), \label{eq:sym1} \\
&T_s^-(x) = \sum_{u=1}^{p-s} \varphi_{u-1,u}^-(x), \label{eq:sym2} \\
&G_s(x) = \sum_{t=1}^{s}  \psi_{t-1,t}^+(x) + \sum_{u=1}^{p-s} \psi_{u-1,u}^-(x). \label{eq:sym3}
\end{align}
By the multiplications among the basis, we have 
\begin{align}
&T_s^{+}(xy)= \sum_{t=1}^{s}\sum_{n=0}^{s-1} \varphi_{t-1, n+1}^+(x) \varphi_{n, t}^+(y), \notag\\
&T_s^{-}(xy)= \sum_{u=1}^{p-s}\sum_{k=0}^{p-s-1} \varphi_{u-1, k+1}^-(x) \varphi_{k, u}^-(y), \notag\\
&G_s(xy) = \sum_{t=1}^s \left\{ \sum_{n=0}^{s-1} \left(\psi_{t-1, n+1}^+(x) \varphi_{n, t}^{+}(y) + \varphi_{t-1, n+1}^+(x)
\psi_{n, t}^+(y)\right)
+ \sum_{k=0}^{p-s-1}\left(\zeta_{t-1, k+1}^-(x) \xi_{k, t}^+(y) + \xi_{t-1, k+1}^-(x) \zeta_{k, t}^+(y)\right) \right\} \notag\\
&+ \sum_{u=1}^{p-s} \left\{ \sum_{k=0}^{p-s-1}\left(\psi_{u-1, k+1}^-(x) \varphi_{k, u}^-(y)
+ \varphi_{u-1,k+1}^-(y) \psi_{k, u}^-(y)\right) 
+ \sum_{n=0}^{s-1}\left(\zeta_{u-1, n+1}^+(x) \xi_{n,u}^-(y) + \xi_{u-1, n+1}^+(x) \zeta_{n,u}^-(y)\right) \right\}. \notag
\end{align}
This shows that these linear functions are symmetric.
Then we have the following result:
\begin{thm}\label{}
The linear functions $T_s^{\pm}$ and $G_s$ for $1 \le s \le p-1$ $T_0$ and $T_p$ are symmetric linear functions
in particular, these linear functions form a basis of $\SLF(\overline{U}_q(sl_2))$.
\end{thm}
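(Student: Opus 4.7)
The plan is to verify symmetry of each of the proposed functionals directly from the multiplication table of the basis of each block $Q_s$, and then to establish linear independence by testing against basis elements. Since $\overline{U}_q(sl_2) = \bigoplus_{s=0}^{p} Q_s$ and each of $T_0,T_p,T_s^\pm,G_s$ is supported only on one block, both symmetry and independence may be checked block by block.

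For $T_0$ and $T_p$, the block $Q_0$ (resp.\ $Q_p$) is isomorphic to $M_p(\mathbb{C})$ via the relations \eqref{eq:irr-mult}, and $T_0$ (resp.\ $T_p$) is nothing but the trace under this identification, hence symmetric. For $T_s^\pm$ and $G_s$ with $1\le s\le p-1$, I would simply invoke the three explicit formulas for $T_s^+(xy)$, $T_s^-(xy)$ and $G_s(xy)$ displayed just above the theorem; each expression is a double sum of products of coefficient functionals whose index set is stable under the substitution $(t,n)\leftrightarrow(n+1,t-1)$ (respectively $(u,k)\leftrightarrow(k+1,u-1)$), so swapping the roles of $x$ and $y$ reproduces the same sum. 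This shows at once $T_s^\pm(xy)=T_s^\pm(yx)$ and $G_s(xy)=G_s(yx)$.

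For linear independence, since the supports of $T_0,T_p$ and of the triples $\{T_s^+,T_s^-,G_s\}$ are pairwise disjoint blocks, it suffices to check independence within each block. The functional $T_0$ on $Q_0$ is nonzero (it is a trace), and similarly for $T_p$. On each $Q_s$ with $1\le s\le p-1$, the three candidates are separated by the basis elements $B_0^+(s,1)$, $B_0^-(p-s,1)$, and $A_0^+(s,1)$: evaluating these three functionals on these three elements produces the identity matrix, since, for instance, $T_s^+(B_0^+(s,1))=1$ while $T_s^-$ and $G_s$ vanish on $B_0^+(s,1)$, and analogously for the other two. Hence the $3p-1$ functionals constructed are linearly independent.

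Finally, by Corollary \ref{cor:sym} the dimension of $\SLF(\overline{U}_q(sl_2))$ equals $3p-1$, which coincides with the cardinality $2+3(p-1)$ of our list, so these symmetric linear functions must form a basis. The only nonobvious step of the argument is the symmetry verification, and this is essentially combinatorial: it rests entirely on the explicit multiplication relations \eqref{eq:1}--\eqref{eq:16}, which have already been established, so the main obstacle is not conceptual but bookkeeping, namely making sure the index substitution genuinely permutes the summation domain bijectively in each of the three formulas.
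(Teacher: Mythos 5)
Your proposal is correct and follows essentially the same route as the paper: symmetry of $T_0,T_p$ via the identification of $Q_0,Q_p$ with $M_p(\mathbb{C})$, symmetry of $T_s^{\pm},G_s$ via the displayed formulas for $T_s^{\pm}(xy)$ and $G_s(xy)$ and the index swap $(t,n)\leftrightarrow(n+1,t-1)$, and the basis claim from the dimension count in Corollary \ref{cor:sym}. The only difference is that you make explicit the linear-independence check (evaluating on $B_0^+(s,1)$, $B_0^-(p-s,1)$, $A_0^+(s,1)$), which the paper leaves implicit; this is a welcome but minor addition.
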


\subsection{Integrals and symmetric linear functions}
The linear function $g^{-1} \rightharpoonup \lambda = \mu \leftharpoonup g$ is symmetric
so this symmetric linear function can be written by 
\begin{align}
g^{-1} \rightharpoonup \lambda = \alpha_0 T_0 + \alpha_p T_p + \sum_{s=1}^{p-1} \left( \alpha_s^+ T_s^+ + \alpha_s^- T_s^- + \beta_s G_s \right), \label{eq:symm-integ} 
\end{align}
for some complex numbers $\alpha_0, \alpha_p, \alpha_s^{\pm}$ and $\beta_s$.
Recall that 
\begin{align}
A_n^{\alpha}(p, t) = \frac{F^n}{2p \prod_{i=1}^{p-1}(\alpha[i][p-i])} E^{p-1}F^{p-t} v^{\alpha} (p, t). \notag
\end{align}
Then,
by Lemma \ref{lem:1}, we have
\begin{align}
g^{-1} \rightharpoonup \lambda \left(A_n^{\alpha}(p, t)\right) 
= \begin{cases}
 \frac{1}{2p \prod_{i=1}^{p-1}(\alpha[i][p-i])}, & n=t-1, \\
 0, & \text{otherwise}.
 \end{cases} \notag
\end{align}
Since 
\begin{align}
&A_n^{+}(s, t) = \frac{F^n}{\gamma_s} E^{p-1}F^{p-t} v^{+} (s, t), \notag\\
&B_n^+(s, t) = \frac{F^n}{\gamma_s} \left( \sum_{\ell=1}^{p-s} \mu _{\ell}^{+}(s) E^{p-\ell-1}F^{p-t-\ell} - 
\frac{\delta_s}{\gamma_s} E^{p-1}F^{p-t}\right) v^{+}(s, t), \notag 
\end{align}
we can see that
\begin{align}
&g^{-1} \rightharpoonup \lambda (A_{n}^{+}(s, t)) = \begin{cases}
                                                    \frac{1}{\gamma_s}, & n=t-1,\\
                                                    0, & \text{otherwise},
                                                    \end{cases} \notag\\
&g^{-1} \rightharpoonup \lambda (B_n^+(s, t)) = \begin{cases}
                                                -\frac{\delta_s}{(\gamma_s)^2}, & n=t-1,\\
                                                 0, & \text{otherwise},
                                                 \end{cases} \notag
\end{align}
by Lemma \ref{lem:1}.
Similarly we also have
\begin{align}
&g^{-1} \rightharpoonup \lambda (A_{k}^{-}(p-s, u)) = \begin{cases}
                                                    \frac{1}{\gamma_s}, & k=u-1,\\
                                                    0, & \text{otherwise},
                                                    \end{cases} \notag\\
&g^{-1} \rightharpoonup \lambda (B_k^-(p-s, u)) = \begin{cases}
                                                -\frac{\delta_s}{(\gamma_s)^2}, & u=k-1,\\
                                                 0, & \text{otherwise}. 
                                                 \end{cases} \notag
\end{align}
Consequently we have
\begin{align}
&\alpha_0 = \frac{1}{2p \prod_{i=1}^{p-1}(-[i][p-i])}, \notag\\
&\alpha_p = \frac{1}{2p \prod_{i=1}^{p-1}([i][p-i])} , \notag\\
&\alpha_s^{\pm} = -\frac{\delta_s}{(\gamma_s)^2}, \notag\\
&\beta_s = \frac{1}{\gamma_s}. \notag
\end{align}
Now we can see that
\begin{align}
\beta_s &=  \frac{1}{2p \prod_{i=1}^{s-1} [i][s-i] \prod_{i=1}^{p-s-1}(-[i][p-s-i])} \notag\\
&= \frac{(-1)^{p-s-1} [s]^2 }{2p([p-1]!)^2} \notag\\
&= \frac{(-1)^{p-s-1}  }{2p^3}[s]^2 (2 \sin \frac{\pi}{p})^{2(p-1)}, \notag
\end{align}
since $[p-1]! = \prod_{\ell = 1}^{p-1} \sin \frac{\ell \pi}{p} / \sin^{p-1} \frac{\pi}{p}$ and $\prod_{\ell = 1}^{p-1} \sin \frac{\ell \pi}{p} = p/2^{p-1}$. 
Similarly we have
\begin{align}
&\alpha_p = \frac{ 1 }{2p^3} \left( 2 \sin \frac{\pi}{p} \right)^{2(p-1)}, \notag\\
&\alpha_0 = \frac{(-1)^{p-1}  }{2p^3}\left( 2 \sin \frac{\pi}{p} \right)^{2(p-1)}. \notag 
\end{align}
By \eqref{eq:00}, we see
\begin{align}
\alpha_s^{\pm} = -\frac{\delta_s}{(\gamma_s)^2} = - \beta_s \left( \sum_{\ell=1}^{s-1} \frac{1}{[\ell][s- \ell]} - \sum_{\ell=1}^{p-s-1} \frac{1}{[\ell][p-s-\ell]}\right). \notag
\end{align}

Acknowledgment.
The author thanks to K. Nagatomo and Y. Sakane for many helpful discussion.
He also thanks to J. Murakami for information about the relation between integrals and symmetric linear functions
and fruitful comments.


\begin{thebibliography}{FGST2}
\bibitem{A}
E. Abe: Hopf Algebras, Cambridge Univ. Press, (1980).
\bibitem{FGST1}
B. L. Feigin, A. M. Gainutdinov, A. M. Semikhatov and I. Yu. Tipunin:
\textit{Modular group representations and fusion in logarithmic conformal field theories and in the quantum group center},
Commun. Math. Phys. \textbf{265} (2006), 47-93.
\bibitem{FGST2}
B. L. Feigin, A. M. Gainutdinov, A. M. Semikhatov and I. Yu. Tipunin:
\textit{Kazhdan-Lustig correspondence for the representation category of the triplet $W$-algebra in logarithmic CFT},
Theor. Math. Phys. \textbf{148} (2006), 1210-1235.
\bibitem{K}
C. Kassel:
Quantum Groups,
Graduate Text in Mathematics. \textbf{155}, Springer-Verlag, (1995).
\bibitem{L}
G. Lusztig: \textit{Quantum deformations of certain simple modules over enveloping algebras},
Adv. in Math. \textbf{70}, (1988), no. 2, 237-249.
\bibitem{MNT}
A. Matsuo, K. Nagatomo and A. Tsuchiya:
\textit{Quasi-finite algebras graded by Hamiltonian and vertex operator algebras},
math.QA/050571.
\bibitem{Mi}
M. Miyamoto:
\textit{Modular invariance of vertex operator algebras satisfying $C_2$-cofiniteness},
Duke. Math. J. \textbf{122} (2004), 51-91.
\bibitem{NaTsu}
K. Nagatomo:
\textit{Private communication}.
\bibitem{NT}
K. Nagatomo and A. Tsuchiya:
\textit{Conformal field theories associated to regular chiral vertex operator algebras ‡T: theories over the projective line},
Duke. Math. J. \textbf{128} (2005), 393-471.
\bibitem{R}
D. E. Radford:
\textit{The trace function and Hopf algebras},
J. Algebra. \textbf{163} (1994), 583-622.
\bibitem{S}
R. Suter:
\textit{Modules over $\mathfrak{U}_q(\mathfrak{sl}_2)$},
Commun. Math. Phys. \textbf{163} (1994), 359-393.
\bibitem{Z}
Y.-C. Zhu:
\it{Modular invariance of characters of vertex operator algebras},
J. Amer. Math. Soc. \textbf{9} (1996), 237-302.
\end{thebibliography}
\end{document}